\numberwithin{equation}{section}
\newtheorem{theorem}{Theorem}[section]
\newtheorem{lemma}[theorem]{Lemma}
\newtheorem{corollary}[theorem]{Corollary}
\newtheorem{proposition}[theorem]{Proposition}
\theoremstyle{definition}
\newtheorem{definition}[theorem]{Definition}
\newtheorem{remark}[theorem]{Remark}
\newcommand{\R}{\mathbb{R}}
\definecolor{urlcolor}{rgb}{0,.145,.698}
\definecolor{linkcolor}{rgb}{.01,0.01,0.51}
\definecolor{citecolor}{rgb}{.12,.54,.11}
\begin{document}

\title{Boundary measurements of positive networks on a cylinder of maximal rank 2 and 3}
\author{David Whiting}\address{Department of Mathematics, Michigan State University, 
East Lansing, MI 48824, USA}

\begin{abstract}
Boundary measurement matrices associated to networks on a plane correspond to certain totally nonnegative Grassmannians as shown previously by A. Postnikov. In this paper, we look to generalize this result by categorizing the boundary measurements associated to networks on a cylinder of maximal rank 2 and 3. In particular, we show that the maximal rank 3 matrices associated to networks on a cylinder are precisely the matrices in which every odd-dimensional minor is nonnegative.
\end{abstract}
\maketitle

\section{Introduction}

Networks on an oriented surface are directed graphs with positive weights on the directed arcs. The sources and sinks of a network will be the vertices on the boundary of the surface. Given a network, we can construct a boundary measurement matrix. See figure $\ref{fig:network_example}$ for an example. Networks on a disk have been previously described by Postnikov in \cite{Postnikov}. More precisely, given a network with $n$ sinks and $m$ sources, it was shown that such a network can be embedded in a disk if and only if every minor of its associated $m \times (m+n)$ boundary measurement matrix is nonnegative. In this paper, we extend this work in the natural way to certain networks in a cylinder, or equivalently, an annulus. 

It is important to note that the definition of a boundary measurement matrix differs in literature. We will define a matrix $M$ using the definition in \cite{GSV-2}, in which we realize $M$ as a $m \times n$ matrix instead of a $m \times (m+n)$ matrix. One can recover the $m \times (m+n)$ boundary measurement matrix $\Tilde{M}$ defined in \cite{Postnikov} from $M$ by adding the columns of an identity matrix of the appropriate size so that the maximal minors of $\Tilde{M}$ are the minors of $M$ (plus the minor corresponding to the embedded identity matrix). The precise definitions we will use are provided in section \ref{sec:prelim}.

There is a very deep connection between these networks, cluster algebras, and integrable systems. For instance, the work by Postnikov in \cite{Postnikov} finds a nice way to describe certain families of cluster coordinates by certain Grassmannian elements. In \cite{GSV-1}, the authors defined a family of Poisson brackets on the boundary measurement matrices of these networks on disks. They generalize this work to an annulus (or cylinder) in \cite{GSV-2}. In particular, they proved that a boundary measurement matrix $M$ satisfies the Sklyanin r-matrix Poisson relations. A non-commutative generalization of this result is provided in \cite{noncommutative}, which is a continuation upon previous results related to discrete completely integrable dynamical systems (see \cite{O1}, \cite{DK}, \cite{DF}, \cite{BR1}) and non-commutative cluster algebras (see \cite{BR2}, \cite{GK}).

In this paper, we consider networks in a cylinder/annulus with an additional restriction; all edges are directed from one boundary circle to the other one. Under these conditions, we describe the set of all boundary measurement matrices of maximal rank 2 and 3.

The rank $2$ case is described using the function $cvar$, which we call the cyclic sign variation function defined in section \ref{sec:prelim}. This function counts the number of sign changes of the maximal minors of $M$ in cyclic order. It is a generalization of the notion of sign variation defined in \cite{Machacek}, obtained by extending the linear order of the columns of $M$ to a cyclic order.

The results of this paper are summarized in the following main theorem.

\begin{theorem}\label{Thm:main}
    Let $M$ be an $m \times n$ real-valued matrix of maximal rank $m \in \{2,3\}$.
    \begin{enumerate}[(i)]
        \item If $m = 2$, then $M$ is a boundary measurement matrix for some network $N$ if and only if $\text{cvar}(M) = 2$.
        \item If $m = 3$, then $M$ is a boundary measurement matrix for some network $N$ if and only if every odd-dimensional minor is nonnegative.
    \end{enumerate}
\end{theorem}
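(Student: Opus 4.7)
My plan is to prove both parts by combining a Lindström--Gessel--Viennot (LGV) expansion adapted to the cylinder with explicit constructions of networks for the reverse implications. The crucial topological fact is that, since every edge is oriented from one boundary circle to the other, the network is acyclic and paths are classified up to homotopy by a winding number around the cylinder. When $k$ vertex-disjoint paths connect cyclically-ordered sources to cyclically-ordered sinks on the opposite boundary, the induced source--sink matching must be a cyclic shift---not merely the identity, as in the disk case---because later-starting paths may wrap around the cylinder further than earlier ones before terminating.

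\textbf{Necessity.} I would first establish an LGV-type formula identifying a $k\times k$ minor $\Delta_{I,J}(M)$ with a signed sum $\sum_\pi \mathrm{sgn}(\pi)\, w(P)$ over systems $P$ of vertex-disjoint paths whose source--sink correspondence is $\pi$. Restricting to the only permutations that actually occur, namely the cyclic shifts $(1\,2\,\cdots\,k)^j$, the sign is $(-1)^{j(k-1)}$. For odd $k$---in particular the $1\times 1$ and $3\times 3$ minors when $m=3$---this sign is always $+1$, so every contribution is nonnegative, which gives the necessity direction of part (ii). For $k = 2$ when $m=2$, cyclic shifts alternate in sign and I would show by a combinatorial tracking argument that the maximal minors, listed in their cyclic order, form exactly two blocks of constant sign, giving $\mathrm{cvar}(M) = 2$.

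\textbf{Sufficiency.} For the converse, I would construct an explicit network realizing $M$. In the $m=2$ case, after a cyclic rotation of columns the hypothesis $\mathrm{cvar}(M) = 2$ puts $M$ into a normal form with a single ``sign-change cut,'' which can be built on the cylinder using two parallel strands wrapping around a controlled number of times, with edge weights set by elementary row/column operations realizable as local network moves. For $m=3$ I intend an inductive construction on the number of columns: strip off a column, apply induction to the reduced matrix (whose odd minors remain nonnegative as they are minors of $M$), and then glue on a new strand whose weights encode the removed column, verifying that nonnegativity of every $3\times 3$ minor---not only those involving the new column---is preserved under the insertion.

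\textbf{Main obstacle.} The most delicate step is expected to be the $m=3$ sufficiency argument. Unlike the planar case treated by Postnikov, the locus ``odd minors nonnegative'' does not correspond to a ready-made positroid cell with a known parametrization, so there is no immediate Grassmannian pigeonhole in which to place $M$. The challenge lies in verifying that the local strand-insertion does not violate the $3\times 3$ nonnegativity constraints among columns disjoint from the inserted one; I anticipate that a careful case analysis based on which cyclic shift each relevant path triple realizes, or alternatively a Gaussian-elimination-style reduction that successively kills matrix entries via admissible network moves, will be required to close the argument.
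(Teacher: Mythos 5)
Your strategy is genuinely different from the paper's, and the necessity half has real merit, but the proposal has gaps that would need to be closed before this is a proof.

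On necessity: the paper avoids LGV entirely. It shows (Proposition \ref{prop:construct_network}) that every boundary measurement matrix is obtained from $I_m$ by four elementary column operations (join, double, cyclic shift, re-scale), each realized by a local network move at a cross-section of the cylinder, and then observes that these operations can only decrease $\text{cvar}$ and cannot destroy nonnegativity of odd minors. Your LGV route is attractive and, for $m=3$, essentially complete once you justify the topological lemma that vertex-disjoint path families between the two boundary circles induce only cyclic-shift matchings; the sign $(-1)^{j(k-1)}$ is then $+1$ for odd $k$. However, for $m=2$ the step from ``each $2\times 2$ minor is an alternating sum over winding numbers'' to ``the consecutive maximal minors, read cyclically, change sign exactly twice'' is not at all automatic: each $\Delta_i = W_0^{(i)} - W_1^{(i)} + \cdots$ can independently be positive or negative depending on the relative magnitudes of the crossing and non-crossing contributions, and nothing in the LGV expansion by itself controls how these signs are distributed around the cycle. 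You call this a ``combinatorial tracking argument'' but give no mechanism; as stated this is a gap. The paper's column-operation argument handles this cleanly because $\text{cvar}$ is monotone under the moves and $\text{cvar}(I_2)=2$.

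On sufficiency for $m=3$: your inductive column-stripping is the right shape, but you have identified the wrong obstacle. Inserting a column never changes the minors among the other columns, so there is no danger there. The real difficulty is on the stripping side: given $M$ with all odd minors nonnegative, you must find a column that can legitimately be removed, meaning one that is a \emph{nonnegative} combination of its cyclic neighbors (so that the removal is the inverse of a join/double/re-scale network move). This existence statement is the heart of the theorem and you do not address it. The paper spends Lemmas \ref{lem:col_rem}--\ref{lem:three_cols} on exactly this: first removing columns that lie in the span of their neighbors (showing the coefficients are automatically nonnegative when $3\times 3$ minors are nonnegative and $\operatorname{rank}=3$), then, once all consecutive triples are independent, subtracting $\varepsilon v_j$ from a neighbor $v_i$ with $v_i\preceq v_j$ to create a new degeneracy, and finally showing via the zero-pattern analysis (Lemma \ref{lem:zero_order}) that if no such subtraction is available then $n=3$. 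None of this is present or replaced in your proposal, and a ``Gaussian-elimination-style reduction'' is not obviously realizable by admissible network moves (subtracting a column is not one of the four operations unless the result remains nonnegative and the minor constraints are preserved). Until you supply a lemma playing the role of Lemmas \ref{lem:col_rem} and \ref{lem:three_cols}, the sufficiency direction is incomplete.
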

By "odd-dimensional minor", we mean every $k \times k$ minor for odd $k$. More precisely, when $m = 3$, these are the $1 \times 1$ minors and $3 \times 3$ minors.

The paper is organized as follows. In Section \ref{sec:prelim}, we provide the necessary background and definitions. In Section \ref{sec:Column Operators}, we provide a method of constructing an arbitrary boundary measurement matrix from another "simpler" boundary measurement matrix. The main goal of this section is to explain how every boundary measurement matrix can be constructed from an identity matrix using certain column operations. We show that the properties regarding the $cvar$ function and the odd-dimensional minors in Theorem \ref{Thm:main} are preserved by these column operations. In sections \ref{sec:2xn_case} and \ref{sec:3xn_case}, we use these ideas to prove Theorem \ref{Thm:main}(i) and Theorem \ref{Thm:main}(ii), respectively. \\

\noindent \textbf{Acknowledgments.} The author is supported NSF research grant DMS \#2100791. The author is grateful to Michael Shapiro for introducing this problem and for his valuable comments towards improving the paper.

 \begin{figure}[t] 
    \centering
        \tikzset{every picture/.style={line width=0.75pt}}
\begin{tikzpicture}[x=0.75pt,y=0.75pt,yscale=-1,xscale=1]
\draw    (17.71,19.71) -- (41.45,19.71) ;
\draw [shift={(43.45,19.71)}, rotate = 180] [color={rgb, 255:red, 0; green, 0; blue, 0 }  ][line width=0.75]    (10.93,-3.29) .. controls (6.95,-1.4) and (3.31,-0.3) .. (0,0) .. controls (3.31,0.3) and (6.95,1.4) .. (10.93,3.29)   ;
\draw    (17.71,64.57) -- (41.45,64.57) ;
\draw [shift={(43.45,64.57)}, rotate = 180] [color={rgb, 255:red, 0; green, 0; blue, 0 }  ][line width=0.75]    (10.93,-3.29) .. controls (6.95,-1.4) and (3.31,-0.3) .. (0,0) .. controls (3.31,0.3) and (6.95,1.4) .. (10.93,3.29)   ;
\draw    (157.43,30.74) -- (198.91,17.38) ;
\draw [shift={(200.82,16.77)}, rotate = 162.15] [color={rgb, 255:red, 0; green, 0; blue, 0 }  ][line width=0.75]    (10.93,-3.29) .. controls (6.95,-1.4) and (3.31,-0.3) .. (0,0) .. controls (3.31,0.3) and (6.95,1.4) .. (10.93,3.29)   ;
\draw    (175.08,60.89) -- (198.82,60.89) ;
\draw [shift={(200.82,60.89)}, rotate = 180] [color={rgb, 255:red, 0; green, 0; blue, 0 }  ][line width=0.75]    (10.93,-3.29) .. controls (6.95,-1.4) and (3.31,-0.3) .. (0,0) .. controls (3.31,0.3) and (6.95,1.4) .. (10.93,3.29)   ;
\draw    (153.02,99.13) -- (198.13,109) ;
\draw [shift={(200.08,109.42)}, rotate = 192.34] [color={rgb, 255:red, 0; green, 0; blue, 0 }  ][line width=0.75]    (10.93,-3.29) .. controls (6.95,-1.4) and (3.31,-0.3) .. (0,0) .. controls (3.31,0.3) and (6.95,1.4) .. (10.93,3.29)   ;
\draw    (76.54,122.66) -- (102.87,103.26) ;
\draw [shift={(104.48,102.07)}, rotate = 143.62] [color={rgb, 255:red, 0; green, 0; blue, 0 }  ][line width=0.75]    (10.93,-3.29) .. controls (6.95,-1.4) and (3.31,-0.3) .. (0,0) .. controls (3.31,0.3) and (6.95,1.4) .. (10.93,3.29)   ;
\draw    (52.27,66.77) -- (73.46,82.49) ;
\draw [shift={(75.07,83.69)}, rotate = 216.57] [color={rgb, 255:red, 0; green, 0; blue, 0 }  ][line width=0.75]    (10.93,-3.29) .. controls (6.95,-1.4) and (3.31,-0.3) .. (0,0) .. controls (3.31,0.3) and (6.95,1.4) .. (10.93,3.29)   ;
\draw    (17.71,111.63) -- (73.18,91.72) ;
\draw [shift={(75.07,91.04)}, rotate = 160.25] [color={rgb, 255:red, 0; green, 0; blue, 0 }  ][line width=0.75]    (10.93,-3.29) .. controls (6.95,-1.4) and (3.31,-0.3) .. (0,0) .. controls (3.31,0.3) and (6.95,1.4) .. (10.93,3.29)   ;
\draw    (52.27,18.24) -- (77.27,5) ;
\draw    (83.16,88.83) -- (102.63,96.89) ;
\draw [shift={(104.48,97.66)}, rotate = 202.48] [color={rgb, 255:red, 0; green, 0; blue, 0 }  ][line width=0.75]    (10.93,-3.29) .. controls (6.95,-1.4) and (3.31,-0.3) .. (0,0) .. controls (3.31,0.3) and (6.95,1.4) .. (10.93,3.29)   ;
\draw    (111.84,97.66) -- (140.72,97.66) ;
\draw [shift={(142.72,97.66)}, rotate = 180] [color={rgb, 255:red, 0; green, 0; blue, 0 }  ][line width=0.75]    (10.93,-3.29) .. controls (6.95,-1.4) and (3.31,-0.3) .. (0,0) .. controls (3.31,0.3) and (6.95,1.4) .. (10.93,3.29)   ;
\draw    (148.61,93.25) -- (168.01,66.91) ;
\draw [shift={(169.2,65.3)}, rotate = 126.38] [color={rgb, 255:red, 0; green, 0; blue, 0 }  ][line width=0.75]    (10.93,-3.29) .. controls (6.95,-1.4) and (3.31,-0.3) .. (0,0) .. controls (3.31,0.3) and (6.95,1.4) .. (10.93,3.29)   ;
\draw    (55.21,22.65) -- (103.99,31.84) ;
\draw [shift={(105.95,32.21)}, rotate = 190.67] [color={rgb, 255:red, 0; green, 0; blue, 0 }  ][line width=0.75]    (10.93,-3.29) .. controls (6.95,-1.4) and (3.31,-0.3) .. (0,0) .. controls (3.31,0.3) and (6.95,1.4) .. (10.93,3.29)   ;
\draw    (53.01,62.36) -- (104.89,37.49) ;
\draw [shift={(106.69,36.62)}, rotate = 154.38] [color={rgb, 255:red, 0; green, 0; blue, 0 }  ][line width=0.75]    (10.93,-3.29) .. controls (6.95,-1.4) and (3.31,-0.3) .. (0,0) .. controls (3.31,0.3) and (6.95,1.4) .. (10.93,3.29)   ;
\draw    (116.98,32.21) -- (144.4,32.21) ;
\draw [shift={(146.4,32.21)}, rotate = 180] [color={rgb, 255:red, 0; green, 0; blue, 0 }  ][line width=0.75]    (10.93,-3.29) .. controls (6.95,-1.4) and (3.31,-0.3) .. (0,0) .. controls (3.31,0.3) and (6.95,1.4) .. (10.93,3.29)   ;
\draw    (154.49,36.62) -- (168.09,57.02) ;
\draw [shift={(169.2,58.68)}, rotate = 236.31] [color={rgb, 255:red, 0; green, 0; blue, 0 }  ][line width=0.75]    (10.93,-3.29) .. controls (6.95,-1.4) and (3.31,-0.3) .. (0,0) .. controls (3.31,0.3) and (6.95,1.4) .. (10.93,3.29)   ;
\draw  [dash pattern={on 0.84pt off 2.51pt}]  (14.77,5) -- (204.49,5) ;
\draw  [dash pattern={on 0.84pt off 2.51pt}]  (14.77,122.66) -- (204.49,122.66) ;
\draw    (14.77,5) -- (14.77,122.66) ;
\draw    (204.49,5) -- (204.49,122.66) ;
\draw (2.15,12.24) node [anchor=north west][inner sep=0.75pt]  [font=\normalsize]  {$1$};
\draw (206.58,9.3) node [anchor=north west][inner sep=0.75pt]  [font=\normalsize]  {$1$};
\draw (2.15,56.36) node [anchor=north west][inner sep=0.75pt]  [font=\normalsize]  {$2$};
\draw (207.32,53.42) node [anchor=north west][inner sep=0.75pt]  [font=\normalsize]  {$2$};
\draw (2.15,104.16) node [anchor=north west][inner sep=0.75pt]  [font=\normalsize]  {$3$};
\draw (206.58,101.22) node [anchor=north west][inner sep=0.75pt]  [font=\normalsize]  {$3$};
\draw (48.23,20.44) node    {$\cdot $};
\draw (48.23,63.83) node    {$\cdot $};
\draw (152.65,32.21) node    {$\cdot $};
\draw (170.67,63.1) node    {$\cdot $};
\draw (148.24,96.92) node    {$\cdot $};
\draw (78.38,88.1) node    {$\cdot $};
\draw (107.79,99.13) node    {$\cdot $};
\draw (110.73,32.21) node    {$\cdot $};
\end{tikzpicture} \\ \vspace{2ex}
    $B = \begin{bmatrix} 1 & 2 & 1 \\ 1 & 2 & 1 \\ 0 & 1 & 1 \end{bmatrix}$
    \caption{A network in a cylinder together with its boundary measurement matrix $B$. The top and bottom dotted edges of the rectangle are glued together to create the cylinder.}\label{fig:network_example}
\end{figure}
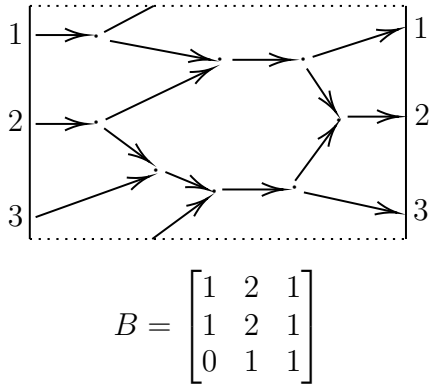

\section{Preliminaries}\label{sec:prelim}

In this section, we will define and describe networks in a cylinder. For each network, we have a collection of boundary measurements which can be represented by a nonnegative real matrix called a boundary measurement matrix.

Our main goal of the paper is to describe these matrices. In Section $\ref{sec:Column Operators}$, we will define a few basic column operators on these matrices and use these to describe our boundary matrices.

\begin{definition}
    A  \textbf{network} $N = (V,E,w)$ is a directed graph $(V,E)$ together with any positive weight function $w : E \to (0, \infty)$. We say that $N$ is a \textbf{network in a cylinder}, if $(V,E)$ can be embedded on any cylinder $S^1 \times [a,b]$ such that:
    \begin{itemize}
        \item[1.] If $v \in V \cap (S^1 \times \{a\})$, then $v$ is a source.
        \item[2.] If $v \in V \cap (S^1 \times \{b\})$, then $v$ is a sink.
        \item[3.] If $v \in V \cap (S^1 \times (a,b))$, then $v$ is neither a source nor a sink.
        \item[4.] The directed graph $(V,E)$ does not contain an oriented loop.
    \end{itemize}
\end{definition}

\begin{definition}
    For a network $N = (V,E,w)$, the \textbf{degree} or \textbf{valency} of a vertex $v \in V$, denoted $\text{deg}(v)$, is the total number of edges incident to $v$.
\end{definition}

\begin{remark}
    Since we assume that we do not have any oriented loops, we will freely assume that every edge $e \in E$ is oriented from $a$ to $b$. 
    
    Explicitly, suppose $N$ is a network in a cylinder $C = S^1 \times [a,b]$. Let $e \in E$ be an edge $v \to w$. Then we can embed $(V,E)$ into $C$ such that $\pi_2(v) < \pi_2(w)$, where $\pi_2 : C \to [a,b]$ is the projection into the second component. 

    To do this, we just need to construct an ordering $v_1 < v_2 < \cdots < v_j$ of the vertices $V = \{v_1, \cdots, v_j\}$ such that if there exists an oriented path from $v_i$ to $v_j$, then $v_i < v_j$. We can always do this if there are no oriented cycles in the underlying graph. We simply pick $v_i$ inductively, so that it is a source of the directed graph induced by removing the prior vertices $v_1, \cdots, v_{i-1}$. Then any embedding $\pi$ such that $v_i < v_j \implies \pi_2(v_i) < \pi_2(v_j)$ whenever $v_i$ and $v_j$ are interior vertices will have the arrows oriented from $a$ to $b$.
    
    To visualize this, we often draw $C$ as a rectangle $R$ whose top and bottom edges are identified as illustrated on the left in figure \ref{fig:network_cut}. The arrows we draw are pointing towards the right. In the picture, we often say that the arrows are oriented "left-to-right".
\end{remark}

\begin{definition}
    A network $N = (V,E,w)$ is called $\textbf{perfect}$ if:
    \begin{itemize}
        \item[5.] If $v \in V$ is a source or a sink, then $\text{deg}(v) = 1$.
        \item[6.] If $v \in V$ is not a source and not a sink, then $\text{deg}(v) = 3$.
    \end{itemize}
\end{definition}

\begin{definition}
    Fix a network $N = (V,E,w)$ in a cylinder. Let $p$ be a directed path from a source $i$ to a sink $j$ that travels along the arrows $\alpha_1, \alpha_2, \cdots, \alpha_n$. We define the weight $w(p)$ as the product of the weights $w(\alpha_i)$ of the arrows $\alpha_i$. That is, \[w(p) := \prod_{i=1}^k w(\alpha_i).\]
\end{definition}

\begin{definition}
    Fix a network $N = (V,E,w)$ in a cylinder. For each source $i$ and sink $j$, consider the set $P_{ij}$ consisting of all oriented paths from $i$ to $j$. We define the \textbf{boundary measurement} $M_{ij}$ as the sum of all weighted paths from $i$ to $j$:
    \begin{align*}
        M_{ij} := \sum_{p \in P_{ij}} w(p).
    \end{align*}
    Since a cylinder is an oriented surface, an orientation of the cylinder induces two orientations on each of the boundary circles. We require that both boundary circles are oriented in the same direction; both positively oriented or both negatively oriented.
    
    After orienting the boundary circles, we obtain a cyclic ordering of the sources and also a cyclic ordering of the sinks. If we fix linear orderings of these cyclic orderings, then we can represent the boundary measurements by a matrix \[M := \{M_{ij}\}.\] We call $M$ a \textbf{boundary measurement matrix} of $N$, and it is unique up to the linear orderings we choose.
\end{definition}

\begin{lemma}\label{lem:perfect_networks}
    Let $N$ be a network in a cylinder with a boundary measurement matrix $M$. Then there exists a perfect network $N'$ in a cylinder with the same boundary measurement matrix $M$.
\end{lemma}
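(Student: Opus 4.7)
The plan is to repeatedly apply local, weight-preserving moves to $N$ that terminate in a perfect network. I would use three moves. \textbf{(A) Boundary splitting:} at a boundary source $v$ of degree $k \geq 2$ with outgoing edges $e_i$ of weight $w_i$ to vertices $u_i$, replace $v$ by a degree-$1$ boundary source $v'$ joined by a weight-$1$ edge to a new interior vertex $v^*$, from which weight-$w_i$ edges go to each $u_i$; do the symmetric move at a sink. \textbf{(B) Degree-$2$ contraction:} an interior vertex of degree $2$ cannot be a source or sink, so it has one incoming edge $u \to v$ of weight $\alpha$ and one outgoing edge $v \to w$ of weight $\beta$; delete $v$ and insert an edge $u \to w$ of weight $\alpha\beta$. \textbf{(C) High-degree splitting:} if an interior vertex $v$ has degree $d \geq 4$ with $\geq 2$ incoming edges $e_1, e_2$ (resp.\ $\geq 2$ outgoing), split $v$ into two interior vertices $v_1, v_2$ joined by a weight-$1$ edge $v_1 \to v_2$ (resp.\ $v_2 \to v_1$), with $e_1, e_2$ incident to $v_1$ and the remaining edges of $v$ incident to $v_2$; this yields degrees $3$ and $d-1$.

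Under each move, every source-to-sink path in the old network lifts to a unique path in the new one with the same product of edge weights (new weight-$1$ edges contribute trivially, and (B) merges two consecutive edges into one carrying their product). Hence the boundary measurement matrix $M$ is preserved. Each move is local and can be carried out inside a small disk around the affected vertex, so the cylindrical embedding is preserved, no oriented cycle is introduced, and the source/sink roles of boundary vertices are unchanged.

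I would sequence the moves in three phases. \emph{Phase 1:} apply (A) once at each boundary vertex of degree $\geq 2$. \emph{Phase 2:} iterate (C) on interior vertices of degree $\geq 4$. The lexicographic vector $(c_D, c_{D-1}, \ldots, c_4)$, where $D$ is the current maximum interior degree and $c_k$ is the number of interior vertices of degree $k$, strictly decreases at each step (a vertex of degree $d$ becomes one of degree $3$ and one of degree $d-1$), so Phase 2 terminates with all interior degrees $\leq 3$. \emph{Phase 3:} iterate (B); since (B) never increases any degree, no new degree-$\geq 4$ vertex appears, and the count of degree-$2$ interior vertices drops by at least one per step. When the three phases are complete, every boundary vertex has degree $1$ and every interior vertex has degree $3$ (degrees $0$ and $1$ are impossible for interior vertices, and degree $2$ has been contracted away), so the resulting network $N'$ is perfect with boundary measurement matrix $M$.

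The main point requiring care is the interaction of the moves, and ordering the three phases is what makes this clean: (C) creates new interior vertices only of degrees $3$ and $d-1 \geq 3$, never of degree $2$, so Phase 2 does not undermine Phase 3; and (B) only lowers degrees, so it never resurrects a degree-$\geq 4$ vertex. A minor technicality is that (B) can create a parallel edge between $u$ and $w$; this is handled either by allowing multi-edges (standard in this setting) or by merging the two parallel edges into a single edge whose weight is the sum of theirs, which preserves $M$ since paths through the two parallel edges contribute additively.
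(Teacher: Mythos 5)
Your proposal is correct, and it takes essentially the same approach as the paper's (which simply cites Postnikov's proof for the planar case and notes that it carries over verbatim to the cylinder). Postnikov's argument is precisely the kind of local, measurement-preserving surgery you describe: splitting boundary vertices of degree $\geq 2$, splitting interior vertices of degree $\geq 4$ into trivalent pieces joined by weight-$1$ edges, and contracting degree-$2$ interior vertices, with the observation that each move is supported in a small disk and hence is oblivious to whether the ambient surface is a disk or a cylinder — which is exactly the point the paper makes when it says the proof carries over. Your termination argument (lexicographic decrease in Phase 2, monotone drop of degree-$2$ count in Phase 3, non-interference between phases) and your path-bijection justification that $M$ is preserved (including the handling of parallel edges produced by contraction) fill in the details the paper leaves implicit; the only unaddressed degeneracy is a boundary vertex of degree $0$, which both you and the paper tacitly exclude.
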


The proof of lemma $\ref{lem:perfect_networks}$ is given for planar networks in \cite{Postnikov}. The proof is exactly the same for a cylindrical network, so it is omitted here.

\begin{remark}
    In most of the proofs in this paper, we will represent a matrix $M = [v_1 | \cdots | v_n]$ by its columns. We typically care more about the cyclic ordering of the columns instead of some fixed linear ordering. In particular, most properties of $M$ that we will describe are completely independent of the linear ordering we choose! So sometimes we will make a particular choice of linear ordering that makes the proof easier; for example, fixing an arbitrary column $v_i$ and assuming it is column $v_1$. We may do the same thing with the rows.
\end{remark}

\section{Column Operations}\label{sec:Column Operators}

The goal of this chapter is to decompose a network into an ordered sequence of smaller networks, whose boundary measurement matrices are related by a sequence of special \textbf{column operations} (definition \ref{def:col_operations}). We begin by defining these smaller networks. We will finish this section by concluding that a matrix $M$ is a boundary measurement matrix for a network if and only if it can be constructed from an identity matrix via these column operations.

\begin{definition}\label{def:subnetwork}
    Let $N = (V,E,w)$ be a network on a cylinder $C = S^1 \times [a,b]$. Fix $t \in (a,b)$. We define a new network $N_t = (V_t, E_t, w_t)$ on the cylinder $C_t = S^1 \times [a,t]$ induced by the arrows that intersect $C_t$ as follows. For each edge $e \in E$, we define a new edge $e_t$ on $C_t$ by its intersection with $C_t$. More specifically, $e_t := e \cap C_t$. Let $\ell_t$ denote the loop $S^1 \times \{t\}$. Then
    \begin{enumerate}[(i)]
        \item $V_t = (V \cap C_t) \cup (E \cap \ell_t),$ \vspace{2ex}
        \item $E_t = \{e_t : e \cap C_t \neq \emptyset\}$ \vspace{2ex}
        \item $w_t(e_t) = w(e)$.
    \end{enumerate}
\end{definition}
See figure \ref{fig:network_cut} for an explicit example.

\begin{remark} \label{rem:network_decomp}
    Since $N$ is finite, we only obtain finitely many boundary measurement matrices $M_{t_1}, \cdots, M_{t_m}$. Let $N_{t_1}, \cdots, N_{t_m}$, be the corresponding boundary measurement matrices ordered so that $t_1 < t_2 < \cdots < t_m$. Notice that $N_{t_m} = N$ and $M_{t_1}$ is a square identity matrix.
\end{remark}

\begin{figure}

\tikzset{every picture/.style={line width=0.75pt}} 

\begin{tikzpicture}[x=0.75pt,y=0.75pt,yscale=-1,xscale=1]

\draw    (20.2,20.25) -- (44.89,20.25) ;
\draw [shift={(46.89,20.25)}, rotate = 180] [color={rgb, 255:red, 0; green, 0; blue, 0 }  ][line width=0.75]    (10.93,-3.29) .. controls (6.95,-1.4) and (3.31,-0.3) .. (0,0) .. controls (3.31,0.3) and (6.95,1.4) .. (10.93,3.29)   ;
\draw    (20.2,66.76) -- (44.89,66.76) ;
\draw [shift={(46.89,66.76)}, rotate = 180] [color={rgb, 255:red, 0; green, 0; blue, 0 }  ][line width=0.75]    (10.93,-3.29) .. controls (6.95,-1.4) and (3.31,-0.3) .. (0,0) .. controls (3.31,0.3) and (6.95,1.4) .. (10.93,3.29)   ;
\draw    (165.08,31.69) -- (208.16,17.81) ;
\draw [shift={(210.07,17.2)}, rotate = 162.15] [color={rgb, 255:red, 0; green, 0; blue, 0 }  ][line width=0.75]    (10.93,-3.29) .. controls (6.95,-1.4) and (3.31,-0.3) .. (0,0) .. controls (3.31,0.3) and (6.95,1.4) .. (10.93,3.29)   ;
\draw    (183.38,62.95) -- (208.07,62.95) ;
\draw [shift={(210.07,62.95)}, rotate = 180] [color={rgb, 255:red, 0; green, 0; blue, 0 }  ][line width=0.75]    (10.93,-3.29) .. controls (6.95,-1.4) and (3.31,-0.3) .. (0,0) .. controls (3.31,0.3) and (6.95,1.4) .. (10.93,3.29)   ;
\draw    (160.5,102.6) -- (207.35,112.85) ;
\draw [shift={(209.3,113.28)}, rotate = 192.34] [color={rgb, 255:red, 0; green, 0; blue, 0 }  ][line width=0.75]    (10.93,-3.29) .. controls (6.95,-1.4) and (3.31,-0.3) .. (0,0) .. controls (3.31,0.3) and (6.95,1.4) .. (10.93,3.29)   ;
\draw    (81.2,127) -- (108.57,106.84) ;
\draw [shift={(110.18,105.65)}, rotate = 143.62] [color={rgb, 255:red, 0; green, 0; blue, 0 }  ][line width=0.75]    (10.93,-3.29) .. controls (6.95,-1.4) and (3.31,-0.3) .. (0,0) .. controls (3.31,0.3) and (6.95,1.4) .. (10.93,3.29)   ;
\draw    (56.04,69.05) -- (78.07,85.4) ;
\draw [shift={(79.68,86.59)}, rotate = 216.57] [color={rgb, 255:red, 0; green, 0; blue, 0 }  ][line width=0.75]    (10.93,-3.29) .. controls (6.95,-1.4) and (3.31,-0.3) .. (0,0) .. controls (3.31,0.3) and (6.95,1.4) .. (10.93,3.29)   ;
\draw    (20.2,115.56) -- (77.8,94.89) ;
\draw [shift={(79.68,94.21)}, rotate = 160.25] [color={rgb, 255:red, 0; green, 0; blue, 0 }  ][line width=0.75]    (10.93,-3.29) .. controls (6.95,-1.4) and (3.31,-0.3) .. (0,0) .. controls (3.31,0.3) and (6.95,1.4) .. (10.93,3.29)   ;
\draw    (56.04,18.73) -- (81.97,5) ;
\draw    (88.07,91.93) -- (108.33,100.31) ;
\draw [shift={(110.18,101.08)}, rotate = 202.48] [color={rgb, 255:red, 0; green, 0; blue, 0 }  ][line width=0.75]    (10.93,-3.29) .. controls (6.95,-1.4) and (3.31,-0.3) .. (0,0) .. controls (3.31,0.3) and (6.95,1.4) .. (10.93,3.29)   ;
\draw    (117.8,101.08) -- (147.83,101.08) ;
\draw [shift={(149.83,101.08)}, rotate = 180] [color={rgb, 255:red, 0; green, 0; blue, 0 }  ][line width=0.75]    (10.93,-3.29) .. controls (6.95,-1.4) and (3.31,-0.3) .. (0,0) .. controls (3.31,0.3) and (6.95,1.4) .. (10.93,3.29)   ;
\draw    (155.93,96.5) -- (176.09,69.14) ;
\draw [shift={(177.28,67.52)}, rotate = 126.38] [color={rgb, 255:red, 0; green, 0; blue, 0 }  ][line width=0.75]    (10.93,-3.29) .. controls (6.95,-1.4) and (3.31,-0.3) .. (0,0) .. controls (3.31,0.3) and (6.95,1.4) .. (10.93,3.29)   ;
\draw    (59.09,23.3) -- (109.74,32.84) ;
\draw [shift={(111.7,33.21)}, rotate = 190.67] [color={rgb, 255:red, 0; green, 0; blue, 0 }  ][line width=0.75]    (10.93,-3.29) .. controls (6.95,-1.4) and (3.31,-0.3) .. (0,0) .. controls (3.31,0.3) and (6.95,1.4) .. (10.93,3.29)   ;
\draw    (56.8,64.47) -- (110.66,38.65) ;
\draw [shift={(112.47,37.79)}, rotate = 154.38] [color={rgb, 255:red, 0; green, 0; blue, 0 }  ][line width=0.75]    (10.93,-3.29) .. controls (6.95,-1.4) and (3.31,-0.3) .. (0,0) .. controls (3.31,0.3) and (6.95,1.4) .. (10.93,3.29)   ;
\draw    (123.14,33.21) -- (151.64,33.21) ;
\draw [shift={(153.64,33.21)}, rotate = 180] [color={rgb, 255:red, 0; green, 0; blue, 0 }  ][line width=0.75]    (10.93,-3.29) .. controls (6.95,-1.4) and (3.31,-0.3) .. (0,0) .. controls (3.31,0.3) and (6.95,1.4) .. (10.93,3.29)   ;
\draw    (162.03,37.79) -- (176.17,59) ;
\draw [shift={(177.28,60.66)}, rotate = 236.31] [color={rgb, 255:red, 0; green, 0; blue, 0 }  ][line width=0.75]    (10.93,-3.29) .. controls (6.95,-1.4) and (3.31,-0.3) .. (0,0) .. controls (3.31,0.3) and (6.95,1.4) .. (10.93,3.29)   ;
\draw  [dash pattern={on 0.84pt off 2.51pt}]  (17.15,5) -- (213.88,5) ;
\draw  [dash pattern={on 0.84pt off 2.51pt}]  (17.15,127) -- (213.88,127) ;
\draw    (17.15,5) -- (17.15,127) ;
\draw    (213.88,5) -- (213.88,127) ;
\draw [color={rgb, 255:red, 255; green, 0; blue, 0 }  ,draw opacity=1 ]   (138.39,5) -- (138.39,127) ;
\draw    (295.38,20.25) -- (320.07,20.25) ;
\draw [shift={(322.07,20.25)}, rotate = 180] [color={rgb, 255:red, 0; green, 0; blue, 0 }  ][line width=0.75]    (10.93,-3.29) .. controls (6.95,-1.4) and (3.31,-0.3) .. (0,0) .. controls (3.31,0.3) and (6.95,1.4) .. (10.93,3.29)   ;
\draw    (295.38,66.76) -- (320.07,66.76) ;
\draw [shift={(322.07,66.76)}, rotate = 180] [color={rgb, 255:red, 0; green, 0; blue, 0 }  ][line width=0.75]    (10.93,-3.29) .. controls (6.95,-1.4) and (3.31,-0.3) .. (0,0) .. controls (3.31,0.3) and (6.95,1.4) .. (10.93,3.29)   ;
\draw    (356.38,127) -- (383.75,106.84) ;
\draw [shift={(385.36,105.65)}, rotate = 143.62] [color={rgb, 255:red, 0; green, 0; blue, 0 }  ][line width=0.75]    (10.93,-3.29) .. controls (6.95,-1.4) and (3.31,-0.3) .. (0,0) .. controls (3.31,0.3) and (6.95,1.4) .. (10.93,3.29)   ;
\draw    (331.22,69.05) -- (353.25,85.4) ;
\draw [shift={(354.86,86.59)}, rotate = 216.57] [color={rgb, 255:red, 0; green, 0; blue, 0 }  ][line width=0.75]    (10.93,-3.29) .. controls (6.95,-1.4) and (3.31,-0.3) .. (0,0) .. controls (3.31,0.3) and (6.95,1.4) .. (10.93,3.29)   ;
\draw    (295.38,115.56) -- (352.97,94.89) ;
\draw [shift={(354.86,94.21)}, rotate = 160.25] [color={rgb, 255:red, 0; green, 0; blue, 0 }  ][line width=0.75]    (10.93,-3.29) .. controls (6.95,-1.4) and (3.31,-0.3) .. (0,0) .. controls (3.31,0.3) and (6.95,1.4) .. (10.93,3.29)   ;
\draw    (331.22,18.73) -- (357.14,5) ;
\draw    (363.24,91.93) -- (383.51,100.31) ;
\draw [shift={(385.36,101.08)}, rotate = 202.48] [color={rgb, 255:red, 0; green, 0; blue, 0 }  ][line width=0.75]    (10.93,-3.29) .. controls (6.95,-1.4) and (3.31,-0.3) .. (0,0) .. controls (3.31,0.3) and (6.95,1.4) .. (10.93,3.29)   ;
\draw    (394.51,101.08) -- (407.76,101.08) ;
\draw [shift={(409.76,101.08)}, rotate = 180] [color={rgb, 255:red, 0; green, 0; blue, 0 }  ][line width=0.75]    (10.93,-3.29) .. controls (6.95,-1.4) and (3.31,-0.3) .. (0,0) .. controls (3.31,0.3) and (6.95,1.4) .. (10.93,3.29)   ;
\draw    (334.27,23.3) -- (384.92,32.84) ;
\draw [shift={(386.88,33.21)}, rotate = 190.67] [color={rgb, 255:red, 0; green, 0; blue, 0 }  ][line width=0.75]    (10.93,-3.29) .. controls (6.95,-1.4) and (3.31,-0.3) .. (0,0) .. controls (3.31,0.3) and (6.95,1.4) .. (10.93,3.29)   ;
\draw    (331.98,64.47) -- (385.84,38.65) ;
\draw [shift={(387.64,37.79)}, rotate = 154.38] [color={rgb, 255:red, 0; green, 0; blue, 0 }  ][line width=0.75]    (10.93,-3.29) .. controls (6.95,-1.4) and (3.31,-0.3) .. (0,0) .. controls (3.31,0.3) and (6.95,1.4) .. (10.93,3.29)   ;
\draw  [dash pattern={on 0.84pt off 2.51pt}]  (292.33,5) -- (413.57,5) ;
\draw  [dash pattern={on 0.84pt off 2.51pt}]  (291.57,127) -- (413.57,127) ;
\draw    (292.33,5) -- (292.33,127) ;
\draw [color={rgb, 255:red, 255; green, 0; blue, 0 }  ,draw opacity=1 ]   (413.57,5) -- (413.57,127) ;
\draw    (394.51,32.45) -- (407.76,32.45) ;
\draw [shift={(409.76,32.45)}, rotate = 180] [color={rgb, 255:red, 0; green, 0; blue, 0 }  ][line width=0.75]    (10.93,-3.29) .. controls (6.95,-1.4) and (3.31,-0.3) .. (0,0) .. controls (3.31,0.3) and (6.95,1.4) .. (10.93,3.29)   ;

\draw (4.29,12.65) node [anchor=north west][inner sep=0.75pt]  [font=\normalsize]  {$1$};
\draw (216.27,11.54) node [anchor=north west][inner sep=0.75pt]  [font=\normalsize]  {$1$};
\draw (4.29,58.4) node [anchor=north west][inner sep=0.75pt]  [font=\normalsize]  {$2$};
\draw (216.27,56.52) node [anchor=north west][inner sep=0.75pt]  [font=\normalsize]  {$2$};
\draw (4.29,107.96) node [anchor=north west][inner sep=0.75pt]  [font=\normalsize]  {$3$};
\draw (216.27,105.88) node [anchor=north west][inner sep=0.75pt]  [font=\normalsize]  {$3$};
\draw (51.85,21.01) node    {$\cdot $};
\draw (51.85,66) node    {$\cdot $};
\draw (160.12,33.21) node    {$\cdot $};
\draw (178.8,65.24) node    {$\cdot $};
\draw (155.55,100.31) node    {$\cdot $};
\draw (83.11,91.16) node    {$\cdot $};
\draw (113.61,102.6) node    {$\cdot $};
\draw (116.66,33.21) node    {$\cdot $};
\draw (280.44,10.71) node [anchor=north west][inner sep=0.75pt]  [font=\normalsize]  {$1$};
\draw (415.19,23.33) node [anchor=north west][inner sep=0.75pt]  [font=\normalsize]  {$1$};
\draw (279.47,57.43) node [anchor=north west][inner sep=0.75pt]  [font=\normalsize]  {$2$};
\draw (416.72,91.95) node [anchor=north west][inner sep=0.75pt]  [font=\normalsize]  {$2$};
\draw (277.53,106.03) node [anchor=north west][inner sep=0.75pt]  [font=\normalsize]  {$3$};
\draw (327.02,21.01) node    {$\cdot $};
\draw (327.02,66) node    {$\cdot $};
\draw (358.29,91.16) node    {$\cdot $};
\draw (388.79,102.6) node    {$\cdot $};
\draw (391.84,33.21) node    {$\cdot $};

\end{tikzpicture}
 \\ \vspace{3ex}
    $\qquad\begin{bmatrix} 1 & 2 & 1 \\ 1 & 2 & 1 \\ 0 & 1 & 1 \end{bmatrix}  \qquad\qquad\qquad\qquad\qquad \begin{bmatrix} 1 & 1 \\ 1 & 1 \\ 0 & 1 \end{bmatrix}$

    \caption{A network $N$ with boundary measurement matrix $M$ (left) and the network $N_t$ with boundary measurement matrix $M_t$ (right). } \label{fig:network_cut}
\end{figure}
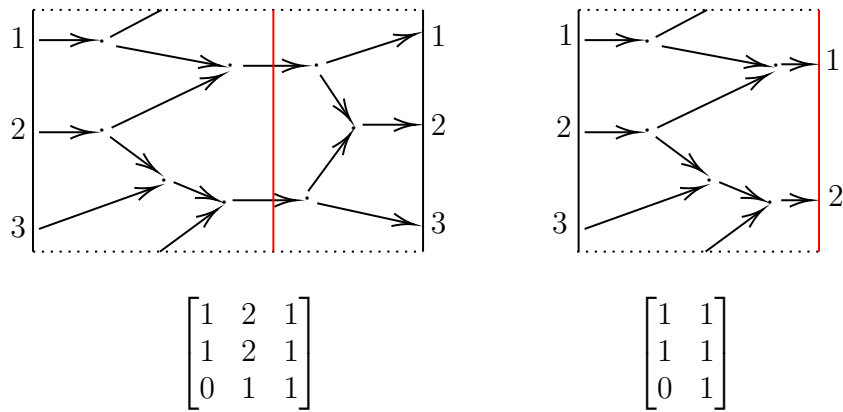

\begin{definition}\label{def:col_operations}
    Let $M = [v_1 | \cdots | v_n]$ be any $m \times n$ matrix. We consider the following four elementary column operations:
    \begin{align*}
        &\text{\textbf{1. Join}:} &&[v_1 | \cdots |v_i | v_{i+1} | \cdots | v_n] &&\mapsto &&[v_1 | \cdots |v_i + v_{i+1} | \cdots | v_n], \\
        &\text{\textbf{2. Double}:} &&[v_1 | \cdots |v_i | \cdots | v_n] &&\mapsto &&[v_1 | \cdots |v_i | v_i | \cdots | v_n], \\
        &\text{\textbf{3. Cyclic Shift}:} &&[v_1 | v_2 | \cdots | v_{n-1} | v_n] &&\mapsto &&[v_n | v_1 | v_2 | \cdots | v_{n-1}], \\
        &\text{\textbf{4. Re-scale}:} &&[v_1 | \cdots |v_i | \cdots | v_n] &&\mapsto &&[v_1 | \cdots |a v_i | \cdots | v_n], \qquad (a \geq 0).
    \end{align*}
    We say that $M$ can be \textbf{constructed} from $M'$, if $M$ can be obtained from $M'$ by applying some sequence of these column operations.
\end{definition}

\begin{remark}
    The property is transitive. That is to say if $M$ can be constructed from $M'$ and $M'$ can be constructed from $M''$, then $M$ can be constructed from $M''$.
\end{remark}

    We can interpret these operations in terms of local changes in the networks. These are illustrated in figure \ref{fig:network_operations}. This is quite constructive. We immediately obtain the following proposition.

\begin{proposition}\label{prop:construct_network}
    Let $M$ be a nonnegative $m \times n$ real matrix. Then $M$ is a boundary measurement matrix for some network $N$ if and only if $M$ can be constructed from the $m \times m$ identity matrix $I_m$. 
\end{proposition}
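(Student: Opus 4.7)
The plan is to prove both directions, leveraging Lemma \ref{lem:perfect_networks} and Remark \ref{rem:network_decomp} for the nontrivial implication.

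For the ($\Leftarrow$) direction I argue by inspection. The identity matrix $I_m$ arises as the boundary measurement matrix of the trivial network on a cylinder consisting of $m$ parallel weight-$1$ edges, each joining source $i$ directly to sink $i$. Each of the four column operations is then interpreted as a local modification of the ambient network (cf.\ Figure \ref{fig:network_operations}): a Join of $v_i, v_{i+1}$ attaches a trivalent merge vertex at the tops of the two corresponding edges with a weight-$1$ outgoing arc; a Double inserts a trivalent split vertex with two weight-$1$ outgoing arcs; a Re-scale by $a$ multiplies the weight of the edge attached to the chosen sink (with $a = 0$ simply deleting that branch); and a Cyclic Shift only picks a different linear representative of the fixed cyclic order of sinks. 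In each case the modified object is manifestly still a network on a cylinder whose boundary measurement matrix is the operated-upon matrix.

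For the ($\Rightarrow$) direction, let $M$ be the boundary measurement matrix of some network $N$. By Lemma \ref{lem:perfect_networks} I may assume $N$ is perfect. By Remark \ref{rem:network_decomp}, I choose a sufficiently refined increasing sequence $t_1 < \cdots < t_r$ so that $M_{t_1} = I_m$, $N_{t_r} = N$, and each slab $S^1 \times [t_i, t_{i+1}]$ contains exactly one \emph{event}: either a single interior trivalent vertex of $N$, or a single edge of $N$ crossing the fixed seam used to present the cylinder as a rectangle. The task reduces to realizing every transition $M_{t_i} \to M_{t_{i+1}}$ by a short sequence of column operations. A two-in, one-out merge vertex with outgoing weight $w$ collapses the two incoming-edge columns $v_j, v_{j'}$ (which meet at the vertex and hence are cyclically adjacent) into $w(v_j + v_{j'})$; this is a Join followed by a Re-scale. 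A one-in, two-out split vertex with outgoing weights $w_j, w_{j'}$ replaces a column $v_k$ by the pair $w_j v_k$ and $w_{j'} v_k$, realized by a Double and two Re-scales. An edge crossing the seam rotates the linear representative of the unchanged cyclic order of sinks, which is exactly a Cyclic Shift.

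The main delicacy is the interplay between the cyclic and linear orderings of columns. At a merge vertex the two incoming edges are guaranteed adjacent only in the cyclic order; if they straddle the seam in the current linear representative, one must apply a preliminary Cyclic Shift to bring them into consecutive linear positions before invoking the Join, and an analogous adjustment may be needed after a Double so that later events again act on adjacent columns. Once this bookkeeping is attended to, composing the column operations across all slabs constructs $M$ from $I_m$, completing the proof.
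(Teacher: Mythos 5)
Your proof takes essentially the same route as the paper's: reduce to a perfect network via Lemma \ref{lem:perfect_networks}, cut the cylinder into slabs via Remark \ref{rem:network_decomp}, and match each slab transition (trivalent merge, trivalent split, seam crossing) with the corresponding short sequence of column operations, running the dictionary the other way for the converse. You simply spell out more explicitly what the paper delegates to Figure \ref{fig:network_operations}, and you make the cyclic-versus-linear bookkeeping explicit, which the paper leaves implicit.

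One small correction: a Re-scale with $a = 0$ produces a zero column, it does not delete a column (that would change the matrix dimensions). In fact, since every edge is oriented from the source circle to the sink circle and there are no oriented loops, every sink is reachable from some source, so a boundary measurement matrix can never have a zero column; thus the $a = 0$ case cannot be realized by a network modification. This is a small imprecision shared with the paper (Figure \ref{fig:network_operations} also implicitly assumes positive weight $a$), and it does not affect the substance of either argument, but the parenthetical about ``deleting that branch'' should be dropped or the Re-scale operation should be restricted to $a > 0$.
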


\begin{proof}
    If $M$ is a boundary measurement matrix for some network $N$, lemma \ref{lem:perfect_networks} tells us that we may assume $N$ is a perfect network. Let $I_m = M_{t_1}, \cdots, M_{t_j} = M$ be the networks defined via \ref{rem:network_decomp}. The difference between two consecutive networks $N_{t_i}$ and $N_{t_{i+1}}$ are minimal by construction; the only possible changes for a perfect network are illustrated in figure \ref{fig:network_operations}. By transitivity, this means that $M_{t_j} = M$ can be constructed from $M_{t_1} = I_m$.

    Conversely, if $M$ can be constructed from the identity matrix $I_m$, then figure \ref{fig:network_operations} illustrates how we construct $M$ from our sequence of column operations. 
\end{proof}   

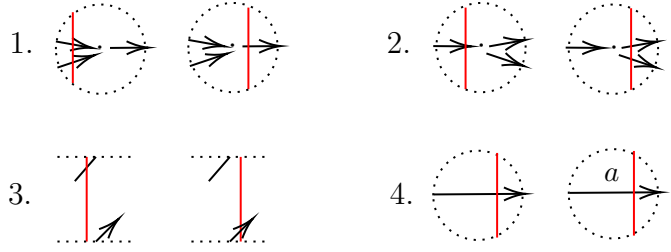
\begin{figure} 
    \tikzset{every picture/.style={line width=0.75pt}} 

\tikzset{every picture/.style={line width=0.75pt}} 

\begin{tikzpicture}[x=0.75pt,y=0.75pt,yscale=-1,xscale=1]

\draw    (46.86,34.42) -- (63.3,37.34) ;
\draw [shift={(65.27,37.69)}, rotate = 190.08] [color={rgb, 255:red, 0; green, 0; blue, 0 }  ][line width=0.75]    (10.93,-3.29) .. controls (6.95,-1.4) and (3.31,-0.3) .. (0,0) .. controls (3.31,0.3) and (6.95,1.4) .. (10.93,3.29)   ;
\draw    (47.63,47.62) -- (64.74,41.88) ;
\draw [shift={(66.64,41.25)}, rotate = 161.44] [color={rgb, 255:red, 0; green, 0; blue, 0 }  ][line width=0.75]    (10.93,-3.29) .. controls (6.95,-1.4) and (3.31,-0.3) .. (0,0) .. controls (3.31,0.3) and (6.95,1.4) .. (10.93,3.29)   ;
\draw    (74.15,37.69) -- (91.46,37.54) ;
\draw [shift={(93.46,37.53)}, rotate = 179.51] [color={rgb, 255:red, 0; green, 0; blue, 0 }  ][line width=0.75]    (10.93,-3.29) .. controls (6.95,-1.4) and (3.31,-0.3) .. (0,0) .. controls (3.31,0.3) and (6.95,1.4) .. (10.93,3.29)   ;
\draw [color={rgb, 255:red, 255; green, 0; blue, 0 }  ,draw opacity=1 ]   (55.4,19.66) -- (55.4,55.39) ;
\draw  [dash pattern={on 0.84pt off 2.51pt}] (46.86,38.3) .. controls (46.86,25.86) and (56.94,15.78) .. (69.38,15.78) .. controls (81.82,15.78) and (91.91,25.86) .. (91.91,38.3) .. controls (91.91,50.74) and (81.82,60.83) .. (69.38,60.83) .. controls (56.94,60.83) and (46.86,50.74) .. (46.86,38.3) -- cycle ;
\draw    (113.66,33.64) -- (130.1,36.56) ;
\draw [shift={(132.07,36.91)}, rotate = 190.08] [color={rgb, 255:red, 0; green, 0; blue, 0 }  ][line width=0.75]    (10.93,-3.29) .. controls (6.95,-1.4) and (3.31,-0.3) .. (0,0) .. controls (3.31,0.3) and (6.95,1.4) .. (10.93,3.29)   ;
\draw    (114.44,46.85) -- (131.54,41.1) ;
\draw [shift={(133.44,40.47)}, rotate = 161.44] [color={rgb, 255:red, 0; green, 0; blue, 0 }  ][line width=0.75]    (10.93,-3.29) .. controls (6.95,-1.4) and (3.31,-0.3) .. (0,0) .. controls (3.31,0.3) and (6.95,1.4) .. (10.93,3.29)   ;
\draw    (140.96,36.91) -- (158.27,36.77) ;
\draw [shift={(160.27,36.75)}, rotate = 179.51] [color={rgb, 255:red, 0; green, 0; blue, 0 }  ][line width=0.75]    (10.93,-3.29) .. controls (6.95,-1.4) and (3.31,-0.3) .. (0,0) .. controls (3.31,0.3) and (6.95,1.4) .. (10.93,3.29)   ;
\draw [color={rgb, 255:red, 255; green, 0; blue, 0 }  ,draw opacity=1 ]   (143.95,17.33) -- (143.95,58.5) ;
\draw  [dash pattern={on 0.84pt off 2.51pt}] (113.66,37.53) .. controls (113.66,25.09) and (123.74,15) .. (136.19,15) .. controls (148.63,15) and (158.71,25.09) .. (158.71,37.53) .. controls (158.71,49.97) and (148.63,60.05) .. (136.19,60.05) .. controls (123.74,60.05) and (113.66,49.97) .. (113.66,37.53) -- cycle ;
\draw    (348.61,34.77) -- (332.16,37.69) ;
\draw [shift={(350.58,34.42)}, rotate = 169.92] [color={rgb, 255:red, 0; green, 0; blue, 0 }  ][line width=0.75]    (10.93,-3.29) .. controls (6.95,-1.4) and (3.31,-0.3) .. (0,0) .. controls (3.31,0.3) and (6.95,1.4) .. (10.93,3.29)   ;
\draw    (347.9,46.99) -- (330.79,41.25) ;
\draw [shift={(349.8,47.62)}, rotate = 198.56] [color={rgb, 255:red, 0; green, 0; blue, 0 }  ][line width=0.75]    (10.93,-3.29) .. controls (6.95,-1.4) and (3.31,-0.3) .. (0,0) .. controls (3.31,0.3) and (6.95,1.4) .. (10.93,3.29)   ;
\draw    (321.28,37.67) -- (303.97,37.53) ;
\draw [shift={(323.28,37.69)}, rotate = 180.49] [color={rgb, 255:red, 0; green, 0; blue, 0 }  ][line width=0.75]    (10.93,-3.29) .. controls (6.95,-1.4) and (3.31,-0.3) .. (0,0) .. controls (3.31,0.3) and (6.95,1.4) .. (10.93,3.29)   ;
\draw [color={rgb, 255:red, 255; green, 0; blue, 0 }  ,draw opacity=1 ]   (336.96,17.64) -- (336.96,58.81) ;
\draw  [dash pattern={on 0.84pt off 2.51pt}] (350.58,38.3) .. controls (350.58,25.86) and (340.49,15.78) .. (328.05,15.78) .. controls (315.61,15.78) and (305.52,25.86) .. (305.52,38.3) .. controls (305.52,50.74) and (315.61,60.83) .. (328.05,60.83) .. controls (340.49,60.83) and (350.58,50.74) .. (350.58,38.3) -- cycle ;
\draw    (281.8,33.99) -- (265.36,36.91) ;
\draw [shift={(283.77,33.64)}, rotate = 169.92] [color={rgb, 255:red, 0; green, 0; blue, 0 }  ][line width=0.75]    (10.93,-3.29) .. controls (6.95,-1.4) and (3.31,-0.3) .. (0,0) .. controls (3.31,0.3) and (6.95,1.4) .. (10.93,3.29)   ;
\draw    (281.1,46.21) -- (263.99,40.47) ;
\draw [shift={(283,46.85)}, rotate = 198.56] [color={rgb, 255:red, 0; green, 0; blue, 0 }  ][line width=0.75]    (10.93,-3.29) .. controls (6.95,-1.4) and (3.31,-0.3) .. (0,0) .. controls (3.31,0.3) and (6.95,1.4) .. (10.93,3.29)   ;
\draw    (254.48,36.9) -- (237.17,36.75) ;
\draw [shift={(256.48,36.91)}, rotate = 180.49] [color={rgb, 255:red, 0; green, 0; blue, 0 }  ][line width=0.75]    (10.93,-3.29) .. controls (6.95,-1.4) and (3.31,-0.3) .. (0,0) .. controls (3.31,0.3) and (6.95,1.4) .. (10.93,3.29)   ;
\draw [color={rgb, 255:red, 255; green, 0; blue, 0 }  ,draw opacity=1 ]   (253.48,17.33) -- (253.48,58.5) ;
\draw  [dash pattern={on 0.84pt off 2.51pt}] (283.77,37.53) .. controls (283.77,25.09) and (273.69,15) .. (261.25,15) .. controls (248.81,15) and (238.72,25.09) .. (238.72,37.53) .. controls (238.72,49.97) and (248.81,60.05) .. (261.25,60.05) .. controls (273.69,60.05) and (283.77,49.97) .. (283.77,37.53) -- cycle ;
\draw    (56.41,104.94) -- (67.05,92.68) ;
\draw [color={rgb, 255:red, 255; green, 0; blue, 0 }  ,draw opacity=1 ]   (62.39,92.68) -- (62.39,135.4) ;
\draw  [dash pattern={on 0.84pt off 2.51pt}]  (47.63,92.68) -- (86.47,92.68) ;
\draw  [dash pattern={on 0.84pt off 2.51pt}]  (47.63,135.4) -- (86.47,135.4) ;
\draw    (67.05,135.4) -- (77.29,125.16) ;
\draw [shift={(78.7,123.75)}, rotate = 135] [color={rgb, 255:red, 0; green, 0; blue, 0 }  ][line width=0.75]    (10.93,-3.29) .. controls (6.95,-1.4) and (3.31,-0.3) .. (0,0) .. controls (3.31,0.3) and (6.95,1.4) .. (10.93,3.29)   ;
\draw    (123.99,104.94) -- (134.63,92.68) ;
\draw [color={rgb, 255:red, 255; green, 0; blue, 0 }  ,draw opacity=1 ]   (140.07,92.68) -- (140.07,135.4) ;
\draw  [dash pattern={on 0.84pt off 2.51pt}]  (115.21,92.68) -- (154.05,92.68) ;
\draw  [dash pattern={on 0.84pt off 2.51pt}]  (115.21,135.4) -- (154.05,135.4) ;
\draw    (134.63,135.4) -- (144.87,125.16) ;
\draw [shift={(146.28,123.75)}, rotate = 135] [color={rgb, 255:red, 0; green, 0; blue, 0 }  ][line width=0.75]    (10.93,-3.29) .. controls (6.95,-1.4) and (3.31,-0.3) .. (0,0) .. controls (3.31,0.3) and (6.95,1.4) .. (10.93,3.29)   ;
\draw    (237.53,111.63) -- (281.77,111.33) ;
\draw [shift={(283.77,111.32)}, rotate = 179.62] [color={rgb, 255:red, 0; green, 0; blue, 0 }  ][line width=0.75]    (10.93,-3.29) .. controls (6.95,-1.4) and (3.31,-0.3) .. (0,0) .. controls (3.31,0.3) and (6.95,1.4) .. (10.93,3.29)   ;
\draw  [dash pattern={on 0.84pt off 2.51pt}] (237.17,112.1) .. controls (237.17,99.66) and (247.25,89.57) .. (259.69,89.57) .. controls (272.13,89.57) and (282.22,99.66) .. (282.22,112.1) .. controls (282.22,124.54) and (272.13,134.62) .. (259.69,134.62) .. controls (247.25,134.62) and (237.17,124.54) .. (237.17,112.1) -- cycle ;
\draw    (305.89,110.85) -- (350.13,110.56) ;
\draw [shift={(352.13,110.54)}, rotate = 179.62] [color={rgb, 255:red, 0; green, 0; blue, 0 }  ][line width=0.75]    (10.93,-3.29) .. controls (6.95,-1.4) and (3.31,-0.3) .. (0,0) .. controls (3.31,0.3) and (6.95,1.4) .. (10.93,3.29)   ;
\draw  [dash pattern={on 0.84pt off 2.51pt}] (305.52,111.32) .. controls (305.52,98.88) and (315.61,88.79) .. (328.05,88.79) .. controls (340.49,88.79) and (350.58,98.88) .. (350.58,111.32) .. controls (350.58,123.76) and (340.49,133.85) .. (328.05,133.85) .. controls (315.61,133.85) and (305.52,123.76) .. (305.52,111.32) -- cycle ;
\draw [color={rgb, 255:red, 255; green, 0; blue, 0 }  ,draw opacity=1 ]   (269.39,90.68) -- (269.39,133.4) ;
\draw [color={rgb, 255:red, 255; green, 0; blue, 0 }  ,draw opacity=1 ]   (338.39,89.68) -- (338.39,132.4) ;

\draw (69.12,37.69) node    {$\cdot $};
\draw (135.92,36.91) node    {$\cdot $};
\draw (21.99,28.48) node [anchor=north west][inner sep=0.75pt]    {$1.$};
\draw (212.3,28.48) node [anchor=north west][inner sep=0.75pt]    {$2.$};
\draw (328.31,37.69) node  [xscale=-1]  {$\cdot $};
\draw (261.51,36.91) node  [xscale=-1]  {$\cdot $};
\draw (21.21,104.61) node [anchor=north west][inner sep=0.75pt]    {$3.$};
\draw (213.85,104.61) node [anchor=north west][inner sep=0.75pt]    {$4.$};
\draw (321.83,97.06) node [anchor=north west][inner sep=0.75pt]  [font=\small]  {$a$};

\end{tikzpicture}
    \caption{An illustration of the column operations used to construct a network $N$.} \label{fig:network_operations}
\end{figure}

\begin{lemma}
    Let $M$, $M'$, $M''$ be any three matrices, such that $M$ can be constructed from $M'$ using only the re-scale and cyclic shift operators. Then we have the following properties:
    \begin{itemize}
        \item[(a)] $M'$ can be constructed from $M$.
        \item[(b)] $M$ can be constructed from $M''$ if and only if $M'$ can be constructed from $M''$.
    \end{itemize}
\end{lemma}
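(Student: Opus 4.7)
The plan is to establish (a) by showing that the two permitted operations---cyclic shift and re-scale---are individually invertible using column operations from Definition \ref{def:col_operations}, and then to derive (b) from (a) together with transitivity of the construction relation (noted in the remark immediately after Definition \ref{def:col_operations}).

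For part (a), the cyclic shift $[v_1|v_2|\cdots|v_n] \mapsto [v_n|v_1|\cdots|v_{n-1}]$ on an $n$-column matrix has order $n$, so applying it $n-1$ additional times recovers the original matrix; hence any single cyclic shift step is invertible within the allowed operations. A re-scale step $v_i \mapsto a v_i$ with $a > 0$ is inverted by the re-scale $v_i \mapsto (1/a)\, v_i$, which is again a legal re-scale. (If $a = 0$ then the operation collapses a column and is not literally invertible, but in the context of boundary measurement matrices of networks all edge weights are positive, so we interpret the lemma with $a > 0$.) Given a sequence $M' = M_0 \to M_1 \to \cdots \to M_k = M$ built from these two operations, I would reverse each individual step, in reverse order, to obtain a sequence of column operations realizing $M'$ as constructed from $M$.

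For part (b), both directions follow immediately from (a) and transitivity. If $M'$ can be constructed from $M''$, then concatenating that sequence with the given construction of $M$ from $M'$ exhibits $M$ as constructed from $M''$. Conversely, if $M$ can be constructed from $M''$, then by (a) the matrix $M'$ can be constructed from $M$, so concatenating again yields $M'$ as constructed from $M''$.

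The only genuine subtlety I expect is the interpretive point about re-scale with $a = 0$; once the convention $a > 0$ is adopted (which is forced by the positivity of network weights), the remainder of the argument is a mechanical bookkeeping exercise based solely on the transitivity remark, so there is no significant technical obstacle here.
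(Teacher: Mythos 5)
Your proof takes essentially the same approach as the paper's, which simply asserts that the re-scale and cyclic shift operators are invertible and lets (b) follow by transitivity; you spell out the inverses explicitly (the $n$-fold cyclic shift having order $n$, and re-scaling by $1/a$). Your observation about the $a = 0$ re-scale is a genuine point the paper glosses over---as stated in Definition \ref{def:col_operations} the re-scale allows $a \geq 0$, under which (a) would fail, so the lemma implicitly needs the convention $a > 0$ that you identify.
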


\begin{proof}
    Part (a) is immediately apparent, because the re-scale and cyclic shift operators are invertible in the ring generated by these four elementary column operators. Part (b) follows immediately.
\end{proof}

\begin{corollary}
    Let $N = (V,E,w)$ be a network on a cylinder. Let $M$ and $M'$ be any two different representations of the boundary measurements $\{M_{ij}\}$. Then for any matrix $M''$, we see that $M$ can be constructed from $M''$ if and only if $M'$ can be constructed from $M''$. In other words, whether or not $M$ can be constructed from $M''$ is independent of the choice of representation we choose for the boundary measurements $\{M_{ij}\}$.
\end{corollary}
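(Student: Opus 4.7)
The plan is to derive this corollary as a direct application of the preceding lemma. The first step is to unpack what it means for $M$ and $M'$ to be two different representations of the same family of boundary measurements $\{M_{ij}\}$. By the definition of a boundary measurement matrix and the remark following Definition 2.6, such a matrix is determined by the data $\{M_{ij}\}$ only after choosing a linear extension of the intrinsic cyclic orderings on the sources (rows) and sinks (columns). So $M$ and $M'$ can only differ by cyclic shifts of columns (and, in principle, of rows).

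The second step is to observe that cyclic shift of columns is exactly operation $3$ among the four elementary column operations in Definition \ref{def:col_operations}. Thus $M$ can be obtained from $M'$ using only cyclic shift operations (and, if one wishes, trivial re-scalings by $1$), which places us inside the hypothesis of the preceding lemma. Part (b) of that lemma then immediately yields the conclusion: $M$ can be constructed from $M''$ if and only if $M'$ can. This is precisely the statement of the corollary.

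The one subtlety I would be careful about is the treatment of cyclic shifts of rows, since the column operations of Definition \ref{def:col_operations} act only on columns. I expect this is either implicit in the conventions of the paper, i.e.\ that the linear ordering of the sources is canonically fixed by the orientation of the cylinder so that the genuine freedom in the choice of representation lies only in the column ordering, or else that it can be absorbed by applying a row-symmetric analogue of the preceding lemma. In either case the corollary reduces to the lemma and requires no new argument beyond this bookkeeping.
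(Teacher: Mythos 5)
Your proposal is correct in its core approach and matches what the paper intends: the corollary is meant to follow at once from part (b) of the preceding lemma, since two linear representations of the same cyclic data differ only by cyclic shifts, which are among the allowed column operations.

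The one point worth sharpening is your second hedge about the row ordering. A ``row-symmetric analogue'' of the preceding lemma would not in fact save the statement for \emph{arbitrary} $M''$, because the notion of construction is defined purely in terms of column operations. Concretely, if $M' = PM$ for a cyclic row permutation $P$, then from $M$ constructible from $M''$ one only gets $M' = PM$ constructible from $PM''$, and $PM''$ is in general not constructible from $M''$ by column operations. (For instance, with $M'' = M = [1,2]^{T}$ and $P$ the swap, $M'$ is not a nonnegative multiple of $M$, so $M'$ is not constructible from $M''$ even though $M$ trivially is.) So for arbitrary $M''$ the corollary only holds cleanly if ``different representations'' is read as varying the column ordering while fixing the row ordering. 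Your \emph{first} suggested reading is therefore the right one. Note, however, that in the one case the paper actually uses downstream, namely $M'' = I_m$, row-permutation invariance does hold by transitivity: a cyclic row permutation of $I_m$ is a cyclic permutation matrix $P$, which can be constructed from $I_m$ by cyclic column shifts, so $PM$ is constructible from $I_m$ exactly when $M$ is. The remark in the paper immediately after the corollary is indeed the $M'' = I_m$ specialization.
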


\begin{remark}
    The fact that some boundary measurement matrix $M$ can be constructed from an identity matrix $I_m$ is independent of the linear orderings we chose in our definition of $M$; it depends only on the circular ordering of its columns.
\end{remark}

\section{2xn Case}\label{sec:2xn_case}

In this section, we describe the $2 \times n$ boundary measurement matrices. We begin by defining a measurement called the cyclic sign variation. Its main goal is to describe the number of columns of a boundary matrix $M$, and to describe the set of boundary $2 \times n$ boundary measurement matrices later in theorem $\ref{Thm:main_2xn}$.

We begin with the definition of the sign variation of a vector $v \in \R^n$, as defined in \cite{Machacek}.

\begin{definition}
    Let $v = (v_1, v_2, \cdots, v_n)$ be a vector in $\R^n$. We define the \textbf{sign variation} of $v$, denoted by $\text{var}(v)$, to be the number of sign changes in the entries of $v$ after removing all zeros. More precisely, let $w = (w_1, \cdots, w_k)$ be the tuple obtained from $v$ by removing all zeros. Then
    \begin{align*}
        \text{var}(v) :=\# \{i < k \mid w_i w_{i+1} < 0\}.
    \end{align*}
\end{definition}

\begin{definition}\label{def:cvar0}
    Let $v = (v_1, v_2, \cdots, v_n)$ be a vector in $\R^n$, whose indices are considered modulo $n$. We define the \textbf{cyclic sign variation} of $v$, denoted by $\text{cvar}(v)$, as the number of sign changes of $v$ in this cyclic ordering. More precisely, if $v$ is nonzero and $v_k$ is the first nonzero entry of $v$, then we can define $\text{cvar}$ in terms of $\text{var}$ by \[\text{cvar}(v) = \text{cvar}(0, 0, \cdots, 0, v_k, \cdots, v_n) = \text{var}(v_k, \cdots, v_n, v_k).\]
    Otherwise, if $v$ is the zero vector, we just say $\text{cvar}(v) = 0$.
\end{definition}

\begin{remark}\label{rem:cvar_even}
    We see that $cvar(v)$ is always even; every time the sign changes, it must change back. This is due to the fact that the first and last entry of the tuple are the same.
\end{remark}

We generalize the notion of cyclic sign variation to an arbitrary $m \times n$ matrix by constructing a $n$-tuple consisting of the maximal minors given by consecutive columns in cyclic order.

\begin{definition}\label{def:cvar}
    Let $M = [v_1 | \cdots | v_n]$ be a nonnegative $m \times n$ real-valued matrix. Let $\Delta_{i} := \det{[v_i \mid v_{i+1} \mid \cdots \mid v_{i+m-1}]}$, where the indices are considered modulo $n$. We define the \textbf{cyclic sign variation} of $M$, denoted by $\text{cvar}(M)$, to be 
    \[\text{cvar}(M) := \text{cvar}(\Delta_1, \Delta_2, \cdots, \Delta_n).\]
\end{definition}

\begin{remark}\label{rem:cvar_iden}
    It follows from our column operations that if $M$ can be constructed from $M'$, then $\text{cvar}(M') \geq \text{cvar}(M)$. As an immediate consequence, if $M$ can be constructed from $I_2$, then $\text{cvar}(M) \leq \text{cvar}(I_2) = 2$.
\end{remark}

\begin{remark}\label{rem:cvar_counting}
    Suppose that for all $i = 1, \cdots, n$, the signs of these determinants are also alternating, i.e.
    \begin{align*}
        \Delta_i \Delta_{i+1} < 0.
    \end{align*}
    Then the $\text{cvar}$ function counts the number of columns of $M$.
\end{remark}

We will think of column vectors as elements of $\mathbb{R}^2$ in the first quadrant. Recall the principal argument $\text{Arg}(v) \in [0, \frac{\pi}{2}]$ of a column vector $v$ is the angle between it and the positive $x$-axis in counter-clockwise direction.

\begin{lemma}\label{lem:2xm_reduction}
    Let $M = [v_1 | \cdots | v_n]$ be a nonnegative $2 \times n$ real-valued matrix. Assume that none of the columns of $M$ is the zero vector. Fix three consecutive columns $v_{i-1}, v_i, v_{i+1}$ of $M$. If either
    \begin{align*}
         \text{Arg}(v_{i-1}) \leq \text{Arg}(v_{i}) \leq \text{Arg}(v_{i+1}) \qquad \text{or} \qquad \text{Arg}(v_{i-1}) \geq \text{Arg}(v_{i}) \geq \text{Arg}(v_{i+1}),
    \end{align*}
    then for some $a,b \geq 0$, we have
    \begin{align*}
        v_i = av_{i-1} + bv_{i+1}.
    \end{align*}
    In particular, $M$ can be constructed from $M' := [v_1 | \cdots | v_{i-1} | v_{i+1} | \cdots | v_n]$ with $\text{cvar}(M) = \text{cvar}(M')$, obtained by removing the column $v_i$ from $M$.
\end{lemma}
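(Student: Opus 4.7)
The plan has three stages: (a) exhibit non-negative scalars with $v_i = av_{i-1} + bv_{i+1}$, (b) construct $M$ from $M'$ using the four column operations of Definition~\ref{def:col_operations}, and (c) verify that $\text{cvar}$ is preserved. Stage (a) is purely geometric/linear-algebraic in $\R^2$; stages (b) and (c) are formal consequences of the expansion produced in (a).

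For (a), every nonzero column of $M$ lies in the closed first quadrant, so each $v_j$ is determined up to positive scaling by $\text{Arg}(v_j) \in [0, \pi/2]$. If $v_{i-1}$ and $v_{i+1}$ are linearly independent, Cramer's rule gives
\[
a \;=\; \frac{\det[v_i \mid v_{i+1}]}{\det[v_{i-1} \mid v_{i+1}]}, \qquad b \;=\; \frac{\det[v_{i-1} \mid v_i]}{\det[v_{i-1} \mid v_{i+1}]}.
\]
Using $\det[u \mid w] = |u|\,|w|\,\sin(\text{Arg}(w) - \text{Arg}(u))$ together with the monotonicity of the three arguments, each numerator and the common denominator share a sign, so $a, b \geq 0$. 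When $v_{i-1}$ and $v_{i+1}$ are parallel, the monotonicity forces $v_i$ to be parallel to them as well, and one takes the appropriate non-negative scalar for one coefficient and zero for the other.

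For (b), in $M'$ the columns $v_{i-1}$ and $v_{i+1}$ are adjacent. Apply Double to $v_{i-1}$ and Re-scale the new copy by $a$; then apply Double to $v_{i+1}$ and Re-scale the new copy by $b$. This yields $[\,\cdots \mid v_{i-1} \mid a v_{i-1} \mid b v_{i+1} \mid v_{i+1} \mid \cdots\,]$. Finally, Join the two middle columns to obtain $[\,\cdots \mid v_{i-1} \mid a v_{i-1} + b v_{i+1} \mid v_{i+1} \mid \cdots\,] = M$. When $a = 0$ or $b = 0$, the corresponding Double/Re-scale pair is skipped; the hypothesis that $v_i$ is nonzero prevents both from vanishing simultaneously. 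For (c), all maximal minors of $M$ and $M'$ indexed by windows disjoint from $v_i$ agree, while the two affected minors of $M$, $\Delta_{i-1} = \det[v_{i-1}\mid v_i]$ and $\Delta_i = \det[v_i \mid v_{i+1}]$, are replaced in $M'$ by the single entry $\Delta := \det[v_{i-1}\mid v_{i+1}]$. Multilinearity and the expansion $v_i = a v_{i-1} + b v_{i+1}$ give $\Delta_{i-1} = b\Delta$ and $\Delta_i = a\Delta$; since $a, b \geq 0$, both share the sign of $\Delta$ or are zero. Substituting the pair $(b\Delta, a\Delta)$ for $\Delta$ in the cyclic tuple therefore cannot create or remove sign changes once zeros are deleted, so $\text{cvar}(M) = \text{cvar}(M')$.

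The main obstacle is the edge-case bookkeeping at the end: when $\Delta = 0$, all three vectors are collinear, and both $\Delta_{i-1}$ and $\Delta_i$ vanish simultaneously with the single minor in $M'$; when exactly one of $a, b$ vanishes, one of $\Delta_{i-1}, \Delta_i$ is zero while the other retains the sign of $\Delta$. In every case, the identities $\Delta_{i-1} = b\Delta$ and $\Delta_i = a\Delta$ together with the convention in Definition~\ref{def:cvar0} of discarding zeros before counting sign changes produce identical reduced cyclic sequences for $M$ and $M'$.
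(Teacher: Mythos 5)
Your stage~(a) is the same argument the paper gives: express $v_i = a v_{i-1} + b v_{i+1}$ and read off the signs of $a,b$ from the determinants $\det[v_{i-1}\mid v_i]$, $\det[v_i\mid v_{i+1}]$, $\det[v_{i-1}\mid v_{i+1}]$, all of which the argument-monotonicity makes non-negative (your Cramer's-rule/sine phrasing is cosmetic). Stages~(b) and~(c) correctly fill in the ``in particular'' claims (the Double/Re-scale/Join construction and the identities $\Delta_{i-1}=b\Delta$, $\Delta_i=a\Delta$ with the zero-deletion convention), which the paper leaves implicit, so the proposal is correct and follows the same route.
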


\begin{proof}
    Without loss of generality, we assume the first case $\text{Arg}(v_{i-1}) \leq \text{Arg}(v_{i}) \leq \text{Arg}(v_{i+1})$. In the cases where $\text{Arg}(v_{i-1}) = \text{Arg}(v_{i})$ or $\text{Arg}(v_{i}) = \text{Arg}(v_{i+1})$, we see that $v_i = a v_{i-1}$ or $v_i = a v_{i+1}$. It follows that $a > 0$, since all three column vectors are nonnegative.

    Next, assume $\text{Arg}(v_{i-1}) < \text{Arg}(v_{i}) < \text{Arg}(v_{i+1})$. In particular, $v_{i-1}$ and $v_{i+1}$ span the column space of $M$, so $v_i = av_{i-1} + bv_{i+1}$ for some $a, b \in \mathbb{R}$. Since the arguments are increasing and the columns are nonnegative, this implies all of the determinants $\text{det}(v_{i-1}, v_i)$, $\text{det}(v_{i-1}, v_{i+1})$, $\text{det}(v_{i}, v_{v_i+1})$ are positive. Therefore
    \begin{align*}
        &0 < \text{det}(v_{i-1}, v_i) = b \cdot \text{det}(v_{i-1}, v_{i+1}), \\ &0 < \text{det}(v_{i}, v_{i+1}) = a \cdot \text{det}(v_{i-1}, v_{i+1}). 
    \end{align*}
    By cancellation, we see that both $a,b > 0$.
\end{proof}

\begin{corollary}\label{cor:2x2_const}
    Let $M = [v_1 | \cdots | v_n]$ be a nonnegative $2 \times n$ real-valued matrix. If $\text{cvar}(M) \leq 2$, then $M$ can be constructed from some nonnegative $2 \times 2$ real-valued matrix.
\end{corollary}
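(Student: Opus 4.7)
The plan is to induct on $n$, the number of columns of $M$. The base case $n=2$ is immediate since $M$ is itself a $2\times 2$ matrix. For the inductive step ($n \geq 3$), I would apply Lemma~\ref{lem:2xm_reduction} to delete a single column, producing a matrix $M'$ of size $2 \times (n-1)$ such that $M$ is constructible from $M'$ and $\mathrm{cvar}(M') = \mathrm{cvar}(M) \leq 2$. The inductive hypothesis then yields a $2 \times 2$ matrix from which $M'$, and hence $M$ by transitivity, can be constructed.

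The key step is to produce a cyclic triple $(v_{i-1}, v_i, v_{i+1})$ of consecutive nonzero columns whose arguments are monotonic, so that Lemma~\ref{lem:2xm_reduction} applies. I would argue by contradiction: if no such triple exists, then at each index $i$ the argument $\theta_i := \mathrm{Arg}(v_i)$ is a strict local extremum of the cyclic sequence, which forces $\Delta_j \Delta_{j+1} < 0$ for every $j$ (where $\Delta_j := \det(v_j, v_{j+1})$). Multiplying these relations around the cycle shows that $n$ must be even; but then the signs of the $\Delta_j$ strictly alternate, giving $\mathrm{cvar}(M) = n \geq 4$, contradicting the hypothesis $\mathrm{cvar}(M) \leq 2$. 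The odd-$n$ case is already a direct contradiction, since one cannot strictly alternate signs around a cycle of odd length.

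The point I expect to require the most care is handling zero columns, since the hypothesis of Lemma~\ref{lem:2xm_reduction} excludes them. A zero column $v_i$ is always insertable by doubling an adjacent column and rescaling one copy to zero, so $M$ is constructible from the matrix $M'$ obtained by deleting $v_i$; the difficulty is that the new determinant $\det(v_{i-1}, v_{i+1})$ appearing in $M'$ can introduce sign changes and raise $\mathrm{cvar}$ past $2$, so the inductive hypothesis does not directly apply. My plan is to address zero columns in a preliminary step — clustering them adjacently via cyclic shifts and eliminating them in a way that either preserves the bound $\mathrm{cvar} \leq 2$ or reduces directly to a matrix all of whose columns are parallel (for which the conclusion is easy) — before invoking the main inductive argument above.
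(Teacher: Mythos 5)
Your argument in the zero-column-free case tracks the paper's: both repeatedly invoke Lemma~\ref{lem:2xm_reduction} to delete a column with $\mathrm{Arg}$-monotone neighbors, and both observe that if no deletion is possible then the determinants $\Delta_i$ strictly alternate in sign, so $\mathrm{cvar}$ equals the number of columns and is therefore at most $2$. The paper phrases this as ``delete a maximal set of columns and look at what remains'' while you phrase it as induction on $n$; these are the same argument.

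The zero-column step, however, is a genuine gap, and your proposed preliminary fix does not work. First, a technical point: cyclic shifts rotate the entire tuple $[v_1|\cdots|v_n]\mapsto[v_n|v_1|\cdots|v_{n-1}]$; they do \emph{not} permute columns independently, so you cannot ``cluster the zero columns adjacently via cyclic shifts'' unless they already are adjacent. Second, and more seriously, the phenomenon you flag really does occur and does not seem to be repairable within this framework. Take
\[
M=\begin{bmatrix}1 & 1 & 0 & 2 & 1\\ 0 & 1 & 0 & 1 & 1\end{bmatrix},
\]
so $v_3=0$. Here $(\Delta_1,\ldots,\Delta_5)=(1,0,0,1,-1)$ and $\mathrm{cvar}(M)=2$, yet deleting $v_3$ gives $(\Delta_i)=(1,-1,1,-1)$ with $\mathrm{cvar}=4$. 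Worse, one can check directly that for \emph{every} nonnegative $w$, the matrix $[v_1|v_2|w|v_4|v_5]$ has $\mathrm{cvar}\ge 4$ unless $w=0$, so there is no way to ``un-rescale'' the zero column either. More structurally, each of the four column operations preserves the property that the cyclic sequence of arguments of the nonzero columns is unimodal (one ascent arc and one descent arc); but the nonzero columns of this $M$ have cyclic argument sequence $0,\pi/4,\arctan\tfrac12,\pi/4$, which has two peaks. So this $M$ cannot be constructed from any $2\times 2$ matrix, even though $\mathrm{cvar}(M)=2$ and $\mathrm{rank}(M)=2$.

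What this means is that the corollary needs the additional hypothesis that $M$ has no zero column — a hypothesis that is automatic for a boundary measurement matrix of a perfect network (every sink has a single incoming edge which, following in-edges backward through the acyclic graph, reaches some source, so no column is identically zero), and which is already assumed in the hypotheses of Lemma~\ref{lem:2xm_reduction}. The paper's own proof of the corollary has the same omission; your instinct that zero columns are the delicate spot is exactly right, and the clean fix is to state and use the no-zero-column hypothesis rather than to try to eliminate zero columns by column operations.
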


\begin{proof}
    We know that if $v_i \in \text{span}(v_{i-1},v_{i+1})$, then $M$ can be constructed from the matrix $[v_1 | \cdots | v_{i-1} | v_{i+1} | \cdots | v_n]$ obtained by removing the column $v_i$. Let $M'$ be the matrix obtained by removing a maximal number of columns this way. Then $M$ can be constructed from $M'$. We just need to show that $M'$ has at most two columns.
    
    The contrapositive of the previous lemma implies that if we cannot remove any more columns, then the signs $\text{Arg}(v_{i+1}) - \text{Arg}(v_{i})$ form an alternating sequence. It is easy to show that $\text{Arg}(v_{i+1}) - \text{Arg}(v_{i})$ and $\Delta_{i+1}$ have the same sign. Since $\text{cvar}(M') \leq \text{cvar}(M) \leq 2$, this tells us that $M'$ has at most $2$ columns. 
\end{proof}

\begin{theorem}\label{Thm:main_2xn}
    Let $M = [v_1 | \cdots | v_n]$ be a nonnegative $2\times n$ real-valued matrix. Suppose $\text{rank}(M) = 2$. Then $\text{cvar}(M) = 2$ if and only if $M$ can be constructed from the $2 \times 2$ identity matrix $I_2$.
\end{theorem}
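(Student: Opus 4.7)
The plan is to reduce both directions of the equivalence to the $2 \times 2$ case, using the column-operation machinery of Section \ref{sec:Column Operators} together with Corollary \ref{cor:2x2_const}, and then finish by explicitly constructing an arbitrary rank-$2$ nonnegative $2 \times 2$ matrix from $I_2$.

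For the forward direction, assume $M$ can be constructed from $I_2$. Remark \ref{rem:cvar_iden} immediately yields $\text{cvar}(M) \leq \text{cvar}(I_2) = 2$, and Remark \ref{rem:cvar_even} forces $\text{cvar}(M)$ to be even, so $\text{cvar}(M) \in \{0, 2\}$. To rule out the value $0$, I would use a geometric argument on the columns, all of which lie in the closed first quadrant of $\R^2$. Because $\text{rank}(M) = 2$, the nonzero columns do not all share the same principal argument. Picking nonzero columns $v_j$ and $v_k$ whose arguments attain the minimum and maximum respectively, one cyclic arc from $v_j$ to $v_k$ must contain a pair of consecutive nonzero columns with strictly increasing argument (hence a positive $\Delta_i$), while the complementary arc contains a pair with strictly decreasing argument (hence a negative $\Delta_i$). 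This forces $\text{cvar}(M) \geq 2$, so combined with the upper bound we obtain equality.

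For the reverse direction, assume $\text{cvar}(M) = 2$. By Corollary \ref{cor:2x2_const}, there is a nonnegative $2 \times 2$ matrix $M_0$ such that $M$ is constructed from $M_0$. Since none of the four elementary column operations can increase the column span, the assumption $\text{rank}(M) = 2$ forces $\text{rank}(M_0) = 2$. By transitivity of constructibility, the task reduces to showing that every rank-$2$ nonnegative $2 \times 2$ matrix can be constructed from $I_2$. I would verify this by direct construction: writing $M_0 = [a_1 e_1 + b_1 e_2 \mid a_2 e_1 + b_2 e_2]$ and treating first the case $\det(M_0) > 0$, start from $I_2$, apply \emph{double} to each column to obtain $[e_1 \mid e_1 \mid e_2 \mid e_2]$, \emph{rescale} to $[a_1 e_1 \mid a_2 e_1 \mid b_2 e_2 \mid b_1 e_2]$, apply a single \emph{cyclic shift} to obtain $[b_1 e_2 \mid a_1 e_1 \mid a_2 e_1 \mid b_2 e_2]$, and finish with two \emph{join}s of adjacent columns. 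The case $\det(M_0) < 0$ is reduced to the previous one by a final cyclic shift.

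The main obstacle I anticipate is the forward direction's lower bound $\text{cvar}(M) \geq 2$: the presence of zero columns (or more general coincidences) introduces zeros into the sequence $(\Delta_1, \ldots, \Delta_n)$ and so complicates the ``strictly increasing / strictly decreasing argument'' step. I would handle this by restricting attention to the cyclic subsequence of nonzero columns and observing that zeros in the $\Delta$ sequence are, by Definition \ref{def:cvar0}, discarded before counting sign changes, so the geometric argument applies verbatim to the nonzero subsequence. The reverse direction, by contrast, is essentially routine once the reduction to $2 \times 2$ has been made via Corollary \ref{cor:2x2_const}.
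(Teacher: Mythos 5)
Your strategy mirrors the paper's: reduce to a $2 \times 2$ matrix via Corollary~\ref{cor:2x2_const}, then build that $2 \times 2$ matrix from $I_2$ by an explicit chain of column operations. Your chain is correct, though both the case split on $\operatorname{sgn}\det M_0$ and the deduction $\text{rank}(M_0)=2$ are superfluous: the paper's single chain (double, double, re-scale, join, join) manufactures \emph{every} nonnegative $2\times 2$ matrix from $I_2$, regardless of its determinant sign or rank.

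Your genuine addition is the lower bound $\text{cvar}(M)\ge 2$. The paper calls this direction "immediately clear" after citing Remark~\ref{rem:cvar_iden}, but that remark only gives $\text{cvar}(M)\le 2$; the equality really does need the rank hypothesis, and you are right to argue it. Your minimum/maximum-argument argument is fine when every column is nonzero. However, your fallback for the general case---pass to the cyclic subsequence of nonzero columns and "apply the geometric argument verbatim"---does not preserve the invariant. Deleting a zero column $v_i$ does not merely discard the two zero entries $\Delta_{i-1}=\Delta_i=0$; it \emph{replaces} them by the new minor $\det[v_{i-1}\mid v_{i+1}]$, which may carry a sign the original $\Delta$-sequence never sees. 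Concretely, $M=[\,e_1\mid 0\mid e_2\mid (1,1)^T\,]$ has $\Delta$-sequence $(0,0,-1,-1)$ and $\text{cvar}(M)=0$, while deleting the zero column gives $\Delta$-sequence $(1,-1,-1)$ and $\text{cvar}=2$. So "the argument applies verbatim to the nonzero subsequence" is not a proof. The correct repair is either to argue directly on $M$'s own $\Delta$-sequence, or to note that no zero column can ever arise from $I_2$ once Re-scale is restricted to $a>0$, as the network interpretation (positive edge weights) requires---at which point your clean, all-columns-nonzero argument needs no patching. (As literally written, Definition~\ref{def:col_operations} allows $a=0$, and then the forward direction fails: $[\,e_1\mid e_2\mid 0\,]$ has rank $2$, is constructible from $I_2$, yet has $\text{cvar}=0$. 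This is an issue with the statement rather than with your proposal.)
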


\begin{proof}
    If $M$ can be constructed from $I_2$, then $\text{cvar}(M) \leq 2$ follows immediately by remark $\ref{rem:cvar_iden}$. So one direction is immediately clear. So we will assume $\text{cvar}(M) = 2$ and prove that $M$ can be constructed from $I_2$.
    
    Suppose $\text{cvar}(M) = 2$. By corollary $\ref{cor:2x2_const}$, we know that $M$ can be constructed from a nonnegative $2 \times 2$ matrix $M'$. We can easily construct $M'$ from $I_2$:
    \begin{equation*}
        \begin{bmatrix} 1 & 0 \\ 0 & 1 \end{bmatrix} \mapsto \begin{bmatrix} 1 & 0 & 0 & 1 \\ 0 & 1 & 1 & 0 \end{bmatrix} \mapsto \begin{bmatrix} (M')_{11} & 0 & 0 & (M')_{12} \\ 0 & (M')_{21} & (M')_{22} & 0 \end{bmatrix} \mapsto \begin{bmatrix} (M')_{11} & (M')_{12} \\(M')_{21} & (M')_{22} \end{bmatrix} = M'.
    \end{equation*}
    
    Hence $M$ can be constructed from some $2 \times 2$ nonnegative matrix $M'$, which can be constructed from the $2 \times 2$ identity matrix $I_2$.
\end{proof}

\section{3xn Case}\label{sec:3xn_case}

In this section, we classify the $3 \times n$ boundary matrices. We begin with a higher dimensional analogue to lemma $\ref{lem:2xm_reduction}$. Instead of this property that $\text{cvar}(M) \leq 2$ in the previous case, we will look at matrices whose $3 \times 3$ minors are also nonnegative.

\begin{lemma}\label{lem:col_rem}
    Let $M = [v_1 | \cdots | v_n]$ be a nonnegative $m \times n$ real-valued matrix. Assume every $3 \times 3$ minor of $M$ is nonnegative and $\text{rank}(M) \geq 3$. Fix three consecutive columns $v_{i-1}, v_i, v_{i+1}$ of $M$. 
    
    If $v_{i-1}, v_i, v_{i+1}$ are linearly dependent, then for some $a,b \geq 0$, we have \[v_i = av_{i-1} + bv_{i+1}.\] In particular, $M$ can be constructed from the matrix $[v_1 | \cdots | v_{i-1} | v_{i+1} | \cdots | v_n]$ obtained by removing the column $v_i$ from $M$.
\end{lemma}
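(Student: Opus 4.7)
The plan is to use the rank hypothesis to locate a column $v_j$ outside $\mathrm{span}(v_{i-1},v_{i+1})$ and then extract the signs of the coefficients $a,b$ from the nonnegativity of carefully chosen $3\times 3$ minors involving $v_j$.

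First I would treat the main case in which $v_{i-1}$ and $v_{i+1}$ are linearly independent. The hypothesis that $v_{i-1},v_i,v_{i+1}$ are linearly dependent then gives a unique expression $v_i = av_{i-1} + bv_{i+1}$ with $a,b \in \R$, and it suffices to show $a, b \geq 0$. Since $\mathrm{rank}(M) \geq 3$, some column $v_j$ with $j \notin \{i-1, i, i+1\}$ lies outside $\mathrm{span}(v_{i-1}, v_{i+1})$. For any 3-element row subset $J$, expanding multilinearly in the middle column of each of the minors below yields
\[
\det\nolimits_J[v_{i-1}\,|\,v_i\,|\,v_j] = b \cdot \det\nolimits_J[v_{i-1}\,|\,v_{i+1}\,|\,v_j], \qquad \det\nolimits_J[v_i\,|\,v_{i+1}\,|\,v_j] = a \cdot \det\nolimits_J[v_{i-1}\,|\,v_{i+1}\,|\,v_j],
\]
where the columns are listed in their natural left-to-right order as indices in $M$. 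Linear independence of $\{v_{i-1}, v_{i+1}, v_j\}$ ensures that the right-hand factor is nonzero for some $J$, and the nonnegativity hypothesis on $3 \times 3$ minors then makes it strictly positive. Nonnegativity of the two left-hand minors now forces $a, b \geq 0$.

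In the degenerate subcase where $v_{i-1}$ and $v_{i+1}$ are themselves linearly dependent, I would argue separately. By nonnegativity of entries, $v_{i-1}$ and $v_{i+1}$ lie on a common ray (possibly with one or both equal to zero). Combining the rank hypothesis with the nonnegativity of additional $3 \times 3$ minors involving two further columns outside the line $\mathrm{span}(v_{i-1}, v_{i+1})$, one shows that $v_i$ also lies on this ray, yielding the desired representation with at least one of $a,b$ equal to $0$.

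For the ``In particular'' claim, once $v_i = a v_{i-1} + b v_{i+1}$ with $a,b \geq 0$ is established, I would exhibit $M$ as constructed from $M' := [v_1 \,|\, \cdots \,|\, v_{i-1} \,|\, v_{i+1} \,|\, \cdots \,|\, v_n]$ by the following explicit sequence of operations from Definition \ref{def:col_operations}: double $v_{i-1}$ and rescale its newly-created right-neighbor copy by $a$; then double $v_{i+1}$ and rescale the original (leftmost) copy of $v_{i+1}$ by $b$; at this stage $av_{i-1}$ and $bv_{i+1}$ are adjacent, and a single join merges them into $v_i$ at the correct position.

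The main obstacle I expect is the careful book-keeping of sign conventions in the multilinearity identities: the nonnegativity hypothesis applies to $3 \times 3$ minors formed with columns in a fixed left-to-right ordering, so the identities above must be verified both for $j > i+1$ and $j < i-1$ (and, for the case where $i$ sits near a cyclic boundary, reindexed via the cyclic shift operation). The degenerate subcase also requires delicate handling, since the naive ray argument is insufficient when both $v_{i-1}$ and $v_{i+1}$ vanish; extra $3\times 3$ minors selected via the rank hypothesis are needed to rule out pathological configurations.
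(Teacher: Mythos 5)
Your main case is essentially the paper's argument with a small (and arguably cleaner) change of variables. Where you write $v_i = a v_{i-1} + b v_{i+1}$ directly and compare $\det_J[v_{i-1}|v_i|v_j]$ and $\det_J[v_i|v_{i+1}|v_j]$ to $\det_J[v_{i-1}|v_{i+1}|v_j]$, the paper instead writes $v_{i+1} = a v_{i-1} + b v_i$ and compares $\det[\tilde v_i|\tilde v_{i+1}|\tilde v_k]$ to $\det[\tilde v_{i-1}|\tilde v_i|\tilde v_k]$, concluding $a<0$ and then $b>0$ from nonnegativity of the entries. Both hinge on the same two ingredients: the rank hypothesis to produce a column outside the plane of the triple, and multilinearity plus nonnegativity of $3\times 3$ minors to pin down the signs. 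Your observation that cyclic reindexing is harmless is correct: a cyclic permutation of three columns is even, so nonnegativity of $3\times 3$ minors is invariant under the cyclic-shift operation. Your explicit double--rescale--join recipe for the ``in particular'' part is correct and is a useful explicitation that the paper leaves implicit.

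Your degenerate case ($v_{i-1},v_{i+1}$ dependent) is split differently from the paper's (which instead treats $v_{i-1},v_i$ dependent or $v_i,v_{i+1}$ dependent as the easy cases), but your instinct that this is where the delicate issue lies is right, and in fact the pathology you flag is not eliminable from the stated hypotheses. Take $M = [e_1\,|\,0\,|\,e_2\,|\,e_3]$: it is nonnegative, of rank $3$, every $3\times 3$ minor is $0$ or $1$, and $v_2,v_3,v_4$ are a consecutive linearly dependent triple, yet $e_2 = a\cdot 0 + b\cdot e_3$ is impossible, and removing $v_3$ drops the rank to $2$ so no column operation can recover $M$. So the ``extra minors'' you hope will rule this out do not exist. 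The paper's own proof has the same hole: its claim that $v_{i-1},v_i$ dependent implies $M$ is obtained from $M'$ by doubling and rescaling tacitly assumes $v_i$ is a nonnegative multiple of $v_{i-1}$, which fails when $v_{i-1}=0\neq v_i$. The fix (for either version) is to strip zero columns from $M$ before invoking the lemma, or to add the hypothesis that no column is zero; once every column is nonzero, both your degenerate case and the paper's reduce to genuine proportionality and go through.
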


\begin{proof}
    Suppose $v_{i-1}$, $v_i$, $v_{i+1}$ are linearly dependent. If $v_{i-1}, v_i$ are linearly dependent, then $v_{i-1},v_i$ are scalar multiplies of one another, so $M$ is obtained from $M'$ by duplicating and rescaling $v_i$. So there is nothing to show. Similarly, if $v_i$, $v_{i+1}$ are linearly dependent, then there is nothing to show. 
    
    So we proceed by assuming these pairs are linearly independent. In fact, because $v_{i-1}$, $v_i$, $v_{i+1}$ are linearly dependent, these pairs span the same $2$-dimensional vectors spaces, i.e., \[\text{span}(v_{i-1}, v_i) = \text{span}(v_i, v_{i+1}).\] Therefore, there exist some $a, b \in \mathbb{R}$ such that \[v_{i+1} = av_{i-1} + bv_i.\] This expression is slightly different from our statement; nevertheless, it suffices to show that $a < 0$ and $b \geq 0$.
    
    Now by assumption, $\text{rank}(M) \geq 3$. So there exists another column vector $v_k$ that completes this $2$-dimensional space to a $3$-dimensional space. Since $v_{i}$, $v_{i+1}$, $v_{k}$ are linearly independent, there exists a $3 \times n$ sub-matrix $\Tilde{M} = [\Tilde{v}_1 | \cdots | \Tilde{v}_n]$ of $M$ such that
    \begin{align*}
        0 < \det([\Tilde{v}_{i} | \Tilde{v}_{i+1} | \Tilde{v}_{k}]) = -a \det([\Tilde{v}_{i-1} | \Tilde{v}_{i} | \Tilde{v}_{k}]).
    \end{align*}
    So we conclude $a < 0$. Since the columns $v_{i-1}, v_i, v_{i+1}$ are nonnegative, we cannot have $b \leq 0$. So $b > 0$ and we are done.
\end{proof}

\begin{lemma}\label{lem:col_indep}
    Let $M = [v_1 | \cdots | v_n]$ be a nonnegative $m \times n$ real-valued matrix. Assume every $3 \times 3$ minor of $M$ is nonnegative and $\text{rank}(M) \geq 3$. 
    
    If every three consecutive columns of $M$ (in cyclic order) are linearly independent, then every three distinct columns of $M$ are linearly independent.
\end{lemma}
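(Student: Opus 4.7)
The plan is to argue by contradiction. Suppose three distinct columns $v_a, v_b, v_c$, indexed so that $a<b<c$ after a cyclic shift, are linearly dependent. First, note that no column of $M$ is the zero vector, because otherwise the consecutive triple containing it would be linearly dependent. Second, we may assume $v_a$ and $v_b$ are linearly independent, for if they are proportional then $a$ and $b$ are not cyclically adjacent (by consecutive independence), and the triple $(a, a+1, b)$ is linearly dependent with $v_a, v_{a+1}$ linearly independent, so we replace the original triple with it. Write $v_c = \alpha v_a + \beta v_b$ and set $W = \text{span}(v_a, v_b)$, a two-dimensional subspace.

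The key tool is a sign analysis on $\alpha, \beta$ via nonnegativity of $3 \times 3$ minors. Fix three rows of $M$ such that for some column $v_d \notin W$ the projection $\tilde{v}$ satisfies $\det(\tilde{v}_a, \tilde{v}_b, \tilde{v}_d) \neq 0$; this is possible since $\text{rank}\,M \geq 3$. For any $v_d \notin W$, expanding minors of the form $\det(\tilde{v}_i, \tilde{v}_j, \tilde{v}_c) = \alpha \det(\tilde{v}_i, \tilde{v}_j, \tilde{v}_a) + \beta \det(\tilde{v}_i, \tilde{v}_j, \tilde{v}_b)$ for $i, j \in \{a, b, d\}$, together with the fact that every $3 \times 3$ minor of $M$ (with column indices in increasing order) is nonnegative, constrains $\alpha$ and $\beta$ depending on the cyclic arc containing $d$: one gets $\alpha, \beta \geq 0$ if $d \in (a, b)$; $\alpha \geq 0$ and $\beta \leq 0$ if $d \in (b, c)$; and $\alpha \leq 0$ and $\beta \geq 0$ if $d \in (c, a)$ cyclically.

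Next, I use consecutive linear independence to populate the arcs with columns outside $W$. Since $v_a, v_b, v_c \in W$ and since $v_{i-1}, v_i, v_{i+1}$ are linearly independent for $i \in \{a, b, c\}$, at least one of $v_{i-1}, v_{i+1}$ lies outside $W$ for each such $i$. These neighbors distribute among the three cyclic arcs, and a short pigeonhole argument on the three independence conditions shows that at least two of the three arcs contain a column outside $W$. Combining the sign constraints from any two such arcs forces at least one of $\alpha, \beta$ to be zero; if the two arcs are $(b, c)$ and $(c, a)$, both are forced to zero, so $v_c = 0$, contradicting the nonzero columns observation.

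Otherwise $v_c$ is a positive scalar multiple of $v_a$ or of $v_b$. Say $v_c = \alpha v_a$ with $\alpha > 0$ (the other case is symmetric). I form the new dependent triple $(a, a+1, c)$, observing that $c \geq a + 3$ (otherwise $v_a, v_{a+1}, v_{a+2}$ would contain two proportional columns, violating consecutive independence), so the arc $(a+1, c)$ is nonempty; and the arc $(c, a)$ is nonempty because consecutive independence at $a$ forbids $v_{a-1}$ from being a scalar multiple of $v_a$. The arc $(a, a+1)$ is empty by construction, but consecutive independence at $a$ and $a+1$ places $v_{a-1}$ in $(c, a)$ and $v_{a+2}$ in $(a+1, c)$, both outside the new plane $\text{span}(v_a, v_{a+1})$. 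Applying the same sign analysis to this new triple and combining the two arc constraints forces both coefficients in $v_c = \alpha' v_a + \beta' v_{a+1}$ to vanish, yielding $v_c = 0$ and contradicting that no column is zero. The main obstacle throughout is the careful cyclic bookkeeping that underlies the sign constraints; once in hand, the contradiction follows quickly from pigeonhole and a single reduction step.
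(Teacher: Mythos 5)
Your proposal is correct, and it takes a genuinely different route from the paper's. The paper proves the lemma by induction on the number of columns: it shows that deleting one column preserves the consecutive-triple-independence property (by a local sign computation relating $v_{i+1} = a\,v_{i-2} + b\,v_{i-1}$ to the minors involving $v_{i-3}$ and $v_i$), then iterates until only three columns remain. You instead argue globally: fix the putative dependent triple $(v_a, v_b, v_c)$, write $v_c = \alpha v_a + \beta v_b$, and observe that any column $v_d \notin W = \mathrm{span}(v_a, v_b)$ imposes sign constraints on $(\alpha, \beta)$ that depend only on which of the three cyclic arcs $(a,b)$, $(b,c)$, $(c,a)$ contains $d$. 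The pigeonhole step (each of $a,b,c$ has a neighbour outside $W$, and those neighbours cannot all land in a single arc) then forces two incompatible constraints, killing at least one coefficient; the residual case in which $v_c$ is a positive multiple of $v_a$ or $v_b$ is cleaned up by re-running the argument on the triple $(a,a{+}1,c)$. The underlying engine (nonnegativity of $3\times 3$ minors after expanding the dependence relation) is the same in both proofs, but the organization is quite different: the paper's version is shorter and more local per step but leaves the inductive bookkeeping implicit, whereas your version is self-contained in a single pass at the cost of the arc/pigeonhole case analysis. One genuine advantage of your approach is that it explicitly handles the degenerate sub-case where the dependence relation degenerates to a proportionality $v_c \propto v_a$; the paper's proof implicitly relies on a strict inequality $0 < \det([\widehat{v}_{i-2} | \widehat{v}_i | \widehat{v}_{i+1}])$ that in fact fails exactly in that degenerate sub-case (where $b = 0$ and $v_{i+1} \propto v_{i-2}$), so your reduction step supplies an argument that the paper's write-up elides.
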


\begin{proof}
    Assume every three consecutive columns of $M$ are linearly independent. We will show that if we remove some arbitrary column $v_i$ from $M$, then the resulting matrix $M'$ also satisfies this property. If we can prove this, then we can then remove all but three columns from $M$, concluding that these three column vectors are linearly independent. We proceed by induction on $m \geq 3$; our base case is clear because $\text{rank}(M) \geq 3$.

    So assume $m \geq 4$. For an arbitrary column $v_i$, we need to show two things; first that $v_{i-2}, v_{i-1}, v_{i+1}$ are linearly independent, and second that $v_{i-1}, v_{i+1}, v_{i+2}$ are linearly independent. The proofs are similar, so we prove the first.
    
    We proceed by contradiction, so assume $v_{i-2}, v_{i-1}, v_{i+1}$ are linearly dependent. Our assumptions on $M$ tell us that $v_{i-2}, v_{i-1}$ are linearly independent, so we can find $a, b \in \mathbb{R}$ such that \[v_{i+1} = av_{i-2} + bv_{i-1}.\] Now since every three consecutive columns of $M$ are linearly independent, we can find a $3 \times n$ sub-matrix $[\Tilde{v}_1 | \cdots | \Tilde{v}_n]$ of $M$ and also a $3 \times n$ sub-matrix $[\widehat{v}_1 | \cdots | \widehat{v}_n]$ of $M$ such that $\det([\Tilde{v}_{i-3} | \Tilde{v}_{i-2} | \Tilde{v}_{i-1}]) > 0$ and $\det([\widehat{v}_{i-2} | \widehat{v}_{i-1} | \widehat{v}_{i}]) > 0$. Then we have inequalities
    \begin{align*}
        0 &\leq \det([\Tilde{v}_{i-3} | \Tilde{v}_{i-2} | \Tilde{v}_{i+1}]) = b \det([\Tilde{v}_{i-3} | \Tilde{v}_{i-2} | \Tilde{v}_{i-1}]), \\
        0 &<  \det([\widehat{v}_{i-2} | \widehat{v}_{i} | \widehat{v}_{i+1}]) = -b \det([\widehat{v}_{i-2} | \widehat{v}_{i-1} | \widehat{v}_{i}]).
    \end{align*}
    The first inequality implies $b \geq 0$, and the second implies $b < 0$. Hence a contradiction, and we are done.
\end{proof}

\begin{corollary}\label{cor:3x3_pos}
    Let $M = [v_1 | \cdots | v_n]$ be a nonnegative $3 \times n$ real-valued matrix. Assume every $3 \times 3$ minor of $M$ is nonnegative and $\text{rank}(M) = 3$. Then $M$ can be constructed from a nonnegative $3 \times n$ real-valued matrix $M'$ such that every $3 \times 3$ minor of $M$ is strictly positive and $\text{rank}(M') = 3$.    
\end{corollary}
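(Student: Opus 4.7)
The plan is to reduce $M$ by repeatedly deleting columns until no three consecutive columns are linearly dependent, and then invoke Lemma \ref{lem:col_indep} to upgrade nonnegativity of $3\times 3$ minors to strict positivity.

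More precisely, I would argue as follows. Start with $M$ itself as the candidate. If three consecutive columns $v_{i-1},v_i,v_{i+1}$ of the current matrix are linearly dependent, apply Lemma \ref{lem:col_rem} to conclude that the matrix obtained by deleting $v_i$ is nonnegative, still has $M$ constructible from it, still has all $3\times 3$ minors nonnegative (every minor is a minor of the original $M$), and still has rank $3$ (since $v_i$ was in the span of the other columns). Replace the current matrix by this reduced matrix and iterate. The column count strictly decreases at each step and is bounded below by $3$ (otherwise the rank could not be $3$), so the procedure terminates. Call the terminal matrix $M'$; by construction $M$ is constructible from $M'$ by transitivity of constructibility.

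At termination, every three cyclically consecutive columns of $M'$ are linearly independent. Now Lemma \ref{lem:col_indep} applies and gives that every three distinct columns of $M'$ are linearly independent; equivalently, every $3\times 3$ minor of $M'$ is nonzero. Since $M'$ inherits the hypothesis that all $3\times 3$ minors are nonnegative, each such minor is in fact strictly positive, and of course $\operatorname{rank}(M')=3$. This produces the $M'$ required by the corollary.

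The only substantive point to verify is that each reduction step preserves all of the hypotheses needed to re-apply Lemmas \ref{lem:col_rem} and \ref{lem:col_indep} at the next stage; nonnegativity of entries and of $3\times 3$ minors are immediate because submatrices (on columns) inherit these properties, and the rank is preserved precisely because the deleted column lies in the span of its two neighbors. The cleanest way to see the rank issue, which I expect to be the main thing a careful reader will want spelled out, is to note that linear dependence of $v_{i-1},v_i,v_{i+1}$ combined with the equation $v_i=av_{i-1}+bv_{i+1}$ supplied by Lemma \ref{lem:col_rem} shows directly that the column space is unchanged upon deletion of $v_i$.
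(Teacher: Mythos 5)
Your proposal is correct and follows essentially the same route as the paper: repeatedly apply Lemma \ref{lem:col_rem} to delete redundant columns until every three cyclically consecutive columns are linearly independent, then invoke Lemma \ref{lem:col_indep} to upgrade nonnegativity of the $3\times 3$ minors to strict positivity. You supply additional detail on termination and rank preservation that the paper leaves implicit, but the argument is the same.
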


\begin{proof}
    This is an immediate consequence of applying lemma $\ref{lem:col_rem}$ until every three consecutive columns are linearly independent. Then when $m = 3$, lemma $\ref{lem:col_indep}$ tells us that every $3 \times 3$ minor of this resulting matrix must be positive.
\end{proof}

Next, we define a partial ordering on the columns of $M$, which will we used in the proceeding lemma.

\begin{definition}
    Let $M = [v_1 | \cdots | v_n]$ be a nonnegative $m \times n$ real-valued matrix. For any two columns $v_i$, $v_j$, we define a partial ordering $v_i \preceq v_j$ if and only if $(v_i)_k = 0 \implies (v_j)_k = 0$ for every row $k$ of $M$. If $v_i \preceq v_j$ or $v_j \preceq v_i$, then we say that $v_i$ and $v_j$ are comparable.
\end{definition}

\begin{lemma}\label{lem:col_subtract}
    Let $M = [v_1 | \cdots | v_n]$ be a nonnegative $3 \times n$ real-valued matrix. Assume every $3 \times 3$ maximal minor of $M$ is strictly positive and $\text{rank}(M) = 3$. Fix a column vector $v_i$ of $M$ and one of its neighbors $v_j \in \{v_{i-1}, v_{i+1}$\}. For $\varepsilon > 0$, we define a new matrix  $M'$ in which $M$ can be constructed from $M'$, by \[M' = [ \cdots | v_{i-1} | v_i - \varepsilon v_j | v_{i+1} | \cdots ].\]

    If $v_i \preceq v_j$, then there exists some $\varepsilon > 0$ sufficiently small such that $M'$ is nonnegative, and every $3 \times 3$ minor of $M'$ is nonnegative.

    In particular, because the map $M \mapsto M'$ is linear with respect to $\varepsilon$, there exists a maximal choice of such $\varepsilon$. For this maximal choice of $\varepsilon$, either some $3 \times 3$ minor of $M'$ is zero, or $v_i- \varepsilon v_j \not\preceq v_j$.
\end{lemma}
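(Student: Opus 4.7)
The plan is to address three things in order: that $M$ can be constructed from $M'$, that some small $\varepsilon>0$ works, and that the maximal $\varepsilon$ has the claimed characterization. The first is immediate from the column operations of Definition \ref{def:col_operations}: starting from $M'$, one doubles $v_j$ (which sits next to the modified column), rescales the copy by $\varepsilon$, and joins the result with $v_i-\varepsilon v_j$ to recover $v_i$, hence $M$.

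For the existence of a small working $\varepsilon$, I would translate $v_i\preceq v_j$ into the support statement that wherever $v_j$ has a positive entry, so does $v_i$. Thus for every row index $k$, either $(v_j)_k=0$ (and the entry is unaffected by $\varepsilon$) or $(v_j)_k>0$ and $(v_i)_k>0$, in which case $(v_i-\varepsilon v_j)_k>0$ for $\varepsilon<(v_i)_k/(v_j)_k$. Choosing $\varepsilon$ smaller than the minimum of these finitely many ratios preserves nonnegativity. For the $3\times 3$ minors I would expand by multilinearity along the modified column: minors not involving column $i$ are unchanged; any minor that involves both columns $i$ and $j$ picks up a correction term $-\varepsilon\det(\cdots\mid v_j\mid\cdots)$ in which $v_j$ appears twice and so vanishes, leaving the minor unchanged; and the remaining minors are strictly positive at $\varepsilon=0$ by hypothesis and depend continuously on $\varepsilon$, so they stay positive on an open interval. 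Intersecting these finitely many conditions gives a nonempty range $\varepsilon\in(0,\delta)$ that works.

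For the maximal $\varepsilon$, each of the finitely many constraints (one nonnegativity inequality per row, one minor inequality per triple of columns) is an affine-linear function of $\varepsilon$, so the admissible set of $\varepsilon\ge 0$ is a closed interval $[0,\varepsilon_{\max}]$; it is bounded because $v_j\neq 0$ forces some coordinate of $v_i-\varepsilon v_j$ to eventually become negative. At $\varepsilon=\varepsilon_{\max}$ at least one constraint must be tight. If some minor inequality is tight, then a $3\times 3$ minor of $M'$ is zero and we are done. Otherwise some nonnegativity inequality is tight: $(v_i-\varepsilon_{\max}v_j)_k=0$ for some row $k$, and such a binding row must have $(v_j)_k>0$ (otherwise that entry would not depend on $\varepsilon$ at all); this $k$ then witnesses $v_i-\varepsilon_{\max}v_j\not\preceq v_j$.

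I do not expect a real obstacle; the whole argument is linearity of determinants together with the openness of strict inequalities. The one spot deserving care is the minor expansion, where one must notice that when the triple of columns in question contains both $i$ and $j$ the correction term identically vanishes, so those minors contribute no constraint on $\varepsilon$ and strict positivity is preserved there automatically.
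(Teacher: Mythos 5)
Your proof is correct and follows essentially the same approach as the paper's: translate $v_i\preceq v_j$ into a support-containment statement to get nonnegativity for small $\varepsilon$, and use linearity/openness of the strict minor inequalities. You are, if anything, more thorough than the paper's very terse proof, which omits the explicit verification that $M$ is constructible from $M'$ via double/re-scale/join, skips the observation that minors containing both columns $i$ and $j$ are unaffected, and does not spell out the argument for the ``in particular'' clause about the maximal $\varepsilon$ (all of which you handle correctly).
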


\begin{proof}
    Note that $v_i \preceq v_j$ means that for every row $k$, if $(v_j)_k \neq 0$, then $(v_i)_k \neq 0$. So for small $\varepsilon > 0$, the vector $v_i - \varepsilon v_j$ is nonnegative. Similarly, every $3 \times 3$ minor of $M$ is strictly positive. Therefore for some small $\varepsilon > 0$, every $3 \times 3$ minor of $M'$ is nonnegative.
\end{proof}

\begin{lemma}\label{lem:zero_order}
    Let $M = [v_1 | \cdots | v_n]$ be a nonnegative $m \times n$ real-valued matrix. Assume $m \geq 3$ and every three distinct columns of $M$ are linearly independent. Then the zeros in each row of $M$ are consecutive (in cyclic order). Equivalently, for any two distinct columns $v_i$,$v_j$ and row $k$, if $(v_i)_k = 0$ and $(v_j)_k = 0$, then either $(v_{i-1})_k = 0$ or $(v_{i+1})_k = 0$.
\end{lemma}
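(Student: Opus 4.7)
The plan is to argue by contradiction: suppose some row $k$ has zeros at two distinct columns $v_i, v_j$ but $(v_{i-1})_k > 0$ and $(v_{i+1})_k > 0$, and then force $v_i$ and $v_j$ to be scalar multiples of one another, violating the pairwise linear independence implied by the hypothesis.

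The lever I will use is the nonnegativity of the $3 \times 3$ minors of $M$, which is the standing hypothesis throughout this section (as in Lemma~\ref{lem:col_indep}) and is in fact necessary — without it, the columns $v_1=(0,1,0)^T,\, v_2=(1,0,0)^T,\, v_3=(0,0,1)^T,\, v_4=(1,1,1)^T$ in $\mathbb{R}^3$ satisfy every assumption of the lemma yet violate its conclusion in row $1$. Observe first that $j \notin \{i-1, i+1\}$, since both neighbors are strictly positive in row $k$. After cyclically rotating columns so that $(i-1, i, i+1) = (1,2,3)$, one has $j \in \{4, \ldots, n\}$. Fix any two rows $p, q \neq k$ and pass to the $3 \times n$ submatrix on rows $k, p, q$. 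Applying $3 \times 3$ nonnegativity to the column triples $(v_1, v_2, v_j)$ and $(v_2, v_3, v_j)$, and expanding each determinant along the row inherited from row $k$ (which has a single nonzero entry: $\alpha := (v_{i-1})_k > 0$ in the first column for the first triple, and $\beta := (v_{i+1})_k > 0$ in the second column for the second triple), one obtains inequalities of the form $c_1 \, \Delta_{pq} \ge 0$ and $c_2 \, \Delta_{pq} \ge 0$ with opposite-signed constants $c_1, c_2$, where
\[
\Delta_{pq} \,:=\, \det\begin{pmatrix} (v_i)_p & (v_j)_p \\ (v_i)_q & (v_j)_q \end{pmatrix}.
\]
These together force $\Delta_{pq} = 0$.

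Since this vanishing holds for every pair $p, q \neq k$, the restrictions of $v_i$ and $v_j$ to the rows other than $k$ are proportional; combined with $(v_i)_k = (v_j)_k = 0$, the full columns $v_i$ and $v_j$ are proportional. Neither is the zero vector (otherwise any triple containing it would be linearly dependent, contradicting the hypothesis), so $v_i$ and $v_j$ are linearly dependent — the desired contradiction.

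The principal obstacle is the sign bookkeeping in the two cofactor expansions: one must verify that the unique nonzero entries sit in different columns of the two triples, so that the resulting cofactor signs are opposite. Once this is checked, the $2 \times 2$ minor is pinned to zero and the proof follows immediately; the remaining case analysis (the cyclic rotation and exclusion of $j = i \pm 1$) is routine.
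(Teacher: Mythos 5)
Your argument is correct and is essentially the same cofactor-expansion strategy as the paper's proof: both expand two $3\times 3$ minors along row $k$, in which the unique nonzero entries land in adjacent column positions, so that both expansions produce the same $2\times 2$ minor of $[v_i \mid v_j]$ with opposite cofactor signs; nonnegativity of both $3\times 3$ minors then pins that $2\times 2$ minor to zero. You quantify over all row pairs $p,q\neq k$ and conclude proportionality of $v_i$ and $v_j$; the paper instead chooses in advance rows $k',k''$ on which the $2\times 2$ minor of $[v_i\mid v_j]$ is nonzero (such rows exist since $v_i,v_j$ are independent), getting the contradiction immediately. The paper's version is a hair more economical, but the two are the same idea. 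Your observation that the hypothesis "every $3\times 3$ minor of $M$ is nonnegative" is genuinely missing from the lemma's stated hypotheses, backed by a clean $3\times 4$ counterexample, is a real improvement over the paper's exposition: the paper's own proof invokes it silently in the sentence "Both of these determinants should be strictly positive," and the lemma is false without it.

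One point to flag: what you actually prove is the "equivalent" reformulation (every zero has a zero neighbor), not the stated conclusion that the zeros are a single cyclic arc, and for general $m\geq 4$ these are not the same condition: the row pattern $(0,0,1,0,0,1)$ in a 6-cycle satisfies the neighbor condition but the zeros form two arcs. The paper's proof takes $s,t$ to be arbitrary nonzero witnesses with $i<s<j<t$ cyclically and uses the triples $(v_i,v_s,v_j)$ and $(v_i,v_j,v_t)$, which works for arcs of any length; your triples $(v_{i-1},v_i,v_j)$ and $(v_i,v_{i+1},v_j)$ only cover the case where $v_i$ is an isolated zero. In the one place the lemma is applied in the paper, $m=3$, linear independence of column triples caps each row at two zeros, so the two formulations coincide and your proof suffices there; but since the lemma is stated for all $m\geq 3$, you should replace $i\pm 1$ by the paper's witnesses $s,t$ to close the gap.
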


\begin{proof}
    Suppose for a contradiction that there exist four columns $v_i, v_s, v_j, v_t$ in cyclic order $i < s < j < t$, such that for some row $k$, we have $(v_i)_k = 0$, $(v_j)_k = 0$, $(v_s)_k \neq 0$, $(v_t)_k \neq 0$. We may re-scale $v_s$ and $v_t$, and assume $(v_s)_k = 1$, $(v_t)_k = 1$. 
    
    Next, since $v_i$ and $v_j$ are linearly independent, we can find two different rows $k'$, $k''$ so that the corresponding $2 \times 2$ minor of $[v_i | v_j]$ is non-zero. Using the rows $k, k', k''$, we define a $3 \times 4$ sub-matrix $[\Tilde{v}_i | \Tilde{v}_s | \Tilde{v}_j | \Tilde{v}_t]$ of $M$. If we compute determinants along row $k$, we see that \[\det([\Tilde{v}_i | \Tilde{v}_s | \Tilde{v}_j]) = - \det([\Tilde{v}_i | \Tilde{v}_j | \Tilde{v}_t]).\] Both of these determinants should be strictly positive, a contradiction.
\end{proof}

\begin{lemma}\label{lem:three_cols}
    Let $M = [v_1 | \cdots | v_n]$ be a nonnegative $3 \times n$ real-valued matrix. Assume every $3 \times 3$ maximal minor of $M$ is strictly positive and $\text{rank}(M) = 3$. If $v_i \not\preceq v_{i+1}$ and $v_i \not\preceq v_{i-1}$ for every column $v_i$ of $M$, then $M$ has three columns, i.e., $n=3$.

    In particular, either every row and column of $M$ contains exactly one zero entry, or every row and column of $M$ contains exactly one non-zero entry.
\end{lemma}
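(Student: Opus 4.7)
The plan is to exploit the combinatorial structure imposed by Lemmas \ref{lem:col_indep} and \ref{lem:zero_order}. Since every $3\times 3$ minor of $M$ is strictly positive and $\mathrm{rank}(M) = 3$, Lemma \ref{lem:col_indep} guarantees that every three distinct columns of $M$ are linearly independent; in particular no row of $M$ is identically zero, for otherwise the rank would drop. Lemma \ref{lem:zero_order} then tells us that in each of the three rows, the zero entries form a cyclically consecutive block which is a proper subset of $\{1,\dots,n\}$. I will think of each row as carrying a single cyclic zero interval with a well-defined first column (its \emph{start}, where the zero block begins just after a nonzero entry) and last column (its \emph{end}).

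First I would translate the hypothesis into this language. The condition $v_i \not\preceq v_{i-1}$ yields a row $k$ with $(v_i)_k = 0$ and $(v_{i-1})_k \neq 0$, which says exactly that $v_i$ is the start of the zero block in row $k$; symmetrically $v_i \not\preceq v_{i+1}$ makes $v_i$ the end of the zero block in some row. Each of the three rows contributes at most one start column (its zero block has a unique start), so at most $3$ distinct columns can ever appear as starts. Since by hypothesis every one of the $n$ columns appears as a start, we get $n \leq 3$, and combined with $\mathrm{rank}(M) = 3$ this forces $n = 3$.

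With $n = 3$ the counting above is tight: each row has exactly one nonempty zero block (of size $1$ or $2$, since a full row of zeros is excluded), and the three start columns $\{s_1, s_2, s_3\}$ form a permutation of $\{1,2,3\}$, as do the three end columns $\{e_1, e_2, e_3\}$. Define $\sigma(s_k) := e_k$; this must be a bijection of $\{1,2,3\}$, and for each $k$ we have either $e_k = s_k$ (block of size $1$) or $e_k \equiv s_k + 1 \pmod{3}$ (block of size $2$). A short case check on the number of rows having size-$2$ blocks shows that only two of the four possibilities give a bijection: either all three blocks have size $1$, or all three have size $2$ (the intermediate cases produce a collision modulo $3$). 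In the size-$1$ case each row has its single zero at a distinct column, so every row and column contains exactly one zero entry. In the size-$2$ case each row has its single nonzero entry at a distinct column (distinctness forced because an all-zero column would drop the rank), so every row and column contains exactly one nonzero entry.

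The main obstacle I anticipate is not any single hard computation but rather fixing the correct combinatorial dictionary between the partial order $\preceq$ and the ``start/end of a zero block'' bookkeeping; once $v_i \not\preceq v_{i\pm 1}$ is read as ``$v_i$ sits at an extremity of a zero block,'' the pigeonhole bound $n \leq 3$ and the $\sigma$-bijection case analysis are essentially forced.
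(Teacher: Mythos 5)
Your proposal is correct, and it takes a genuinely different route from the paper's proof. The paper's argument is a case split on the maximum number of zeros in a single row: if some row has $\geq 3$ zeros it contradicts positivity of a minor immediately; if every row has $\leq 1$ zero then a direct count of zeros gives $n\le 3$; and the remaining case of a row with exactly two zeros is handled by a hands-on chain of deductions from $v_1 \not\preceq v_2$, $v_2 \not\preceq v_1$, $v_3\not\preceq v_2$, etc., to pin down the support of each column and force $n=3$. You instead translate the hypothesis into the language of Lemma~\ref{lem:zero_order}: $v_i\not\preceq v_{i-1}$ says column $i$ is the cyclic start of a row's zero block, and $v_i\not\preceq v_{i+1}$ says it is an end. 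Since each row has a single (possibly empty) cyclic zero block, it supplies at most one start, and by hypothesis every column is a start, giving $n\le 3$ by pigeonhole. Your derivation of the dichotomy in the ``in particular'' clause via the bijection $\sigma(s_k)=e_k$ on $\mathbb{Z}/3$ and the mod-$3$ collision argument is also correct. This is, to my eye, cleaner and more symmetric than the paper's case analysis and would generalize more gracefully. Two small remarks: the appeal to Lemma~\ref{lem:col_indep} is unnecessary, since the hypothesis of the present lemma already assumes every $3\times 3$ minor is strictly positive, which directly gives that any three columns are linearly independent; and you should say explicitly (though it follows quickly) that once $n=3$, every row must actually have a nonempty zero block so that the start map $k\mapsto s_k$ is defined on all three rows and hence a bijection.
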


\begin{proof}
    Note that if a column $v_i$ does not contain any zero entry, then $v_i \preceq v_{j}$ for every column $v_j$ of $M$. Hence every column of $M$ must have at least one zero.
    
    If every row of $M$ has at most one zero, then $M$ can only have at most three columns, so there is nothing to show. Each column will only have one zero, because every column has at least one.
    
    If some row of $M$ has three zeros, then some $3 \times 3$ minor of $M$ would be zero, a contradiction. So it suffices to prove the lemma when some row of $M$ has exactly two zeros.
 
    Assume there is a row $k$ of $M$ such that we have two zeros in the first two columns $v_1$ and $v_2$ of $M$; that is to say $(v_1)_{k} = 0$ and $(v_2)_{k} = 0$.
    
    Since $v_2 \not\preceq v_1$, there exists some row $k'$ such that $(v_2)_{k'} = 0$ but $(v_1)_{k'} \neq 0$. Since $v_1 \not\preceq v_2$, there exists some row $k''$ such that $(v_1)_{k''} = 0$ but $(v_2)_{k''} \neq 0$. Then $k, k', k''$ are all of the rows of $M$; we have shown that both $v_1$ and $v_2$ have only one non-zero entry.

    Next, notice that $v_3 \not\preceq v_2$, so in particular, since $(v_2)_{k''} \neq 0$ is the only non-zero entry of $v_2$, we must have $(v_3)_{k''} = 0$. Since we also have $(v_1)_{k''} = 0$, it follows from lemma $\ref{lem:zero_order}$ that $v_1$ and $v_3$ must be adjacent columns in our cyclic ordering. This can only happen if $n = 3$, i.e., $M$ has three columns. Furthermore, the previous argument that shows $v_1$ and $v_2$ has only one non-zero entry can be applied here to show that $v_1$ and $v_3$ have only one non-zero entry, completing the proof.
\end{proof}

\begin{theorem}
    Let $M = [v_1 | \cdots | v_n]$ be a nonnegative $3 \times n$ real-valued matrix. Suppose $\text{rank}(M) = 3$. Then every $3 \times 3$ maximal minor of $M$ is nonnegative if and only if $M$ can be constructed from the $3 \times 3$ identity matrix $I_3$.
\end{theorem}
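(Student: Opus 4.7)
The plan is to prove the forward direction by an easy induction, and the reverse by an iterative reduction that peels off columns and zeros out entries until the matrix has only three columns of very restricted form, which is then constructed from $I_3$ by hand.

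For the forward direction, I would verify that each of the four column operations preserves nonnegativity of every $3 \times 3$ minor, by induction on the length of the construction. Join expands by multilinearity into a sum of two $3 \times 3$ minors of the prior matrix; Double creates a duplicate column, so any $3 \times 3$ minor using both copies vanishes while the rest are unchanged; a Cyclic shift of all $n$ columns acts on each triple of column-indices either as the identity or as a $3$-cycle, both of which are even permutations of $S_3$, so $3 \times 3$ minors keep their signs; Re-scale by $a \geq 0$ multiplies certain minors by $a$. Since the only nonzero $3 \times 3$ minor of $I_3$ equals $1$, nonnegativity propagates.

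For the reverse direction, assume every $3 \times 3$ minor is nonnegative and $\text{rank}(M) = 3$. I would first apply Corollary $\ref{cor:3x3_pos}$ to reduce to strictly positive $3 \times 3$ minors, and then run the following loop. While some column $v_i$ satisfies $v_i \preceq v_{i-1}$ or $v_i \preceq v_{i+1}$, apply Lemma $\ref{lem:col_subtract}$ at the maximal admissible $\varepsilon$. In case (a), some $3 \times 3$ minor of the new matrix becomes zero; the contrapositive of Lemma $\ref{lem:col_indep}$ then produces three consecutive linearly dependent columns, so Lemma $\ref{lem:col_rem}$ deletes one and Corollary $\ref{cor:3x3_pos}$ restores strict positivity. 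In case (b), $v_i - \varepsilon v_j \not\preceq v_j$ because a previously nonzero entry of $v_i$ has been driven to zero, so $n$ is unchanged but the total zero count has strictly increased. The lexicographic measure $(n,\,-\#\text{zero entries})$ strictly decreases at every step and is bounded below, so the loop terminates with no column $\preceq$ a cyclic neighbor, and Lemma $\ref{lem:three_cols}$ forces $n = 3$.

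It remains to build the terminal $3 \times 3$ matrix from $I_3$. Lemma $\ref{lem:three_cols}$ leaves two types. In type (i), $M = DP$ for a positive diagonal $D$ and a permutation matrix $P$, and $\det M > 0$ forces $P$ to be even (a power of the cyclic shift), so $M$ is obtained from $I_3$ by Cyclic shifts and Re-scales. In type (ii), each row and column has exactly one zero entry, and a quick expansion of $\det M$ shows the zero pattern must itself sit on an even permutation, so after a Cyclic shift I may assume the zeros lie on the diagonal. Such an $M$ I would construct from $I_3$ by Doubling each column to get $[e_1|e_1|e_2|e_2|e_3|e_3]$, Re-scaling with six independent positive parameters, Joining the two interior consecutive pairs, and then performing a Cyclic shift followed by one more Join to wrap the remaining $e_3$ and $e_1$ columns together into the third target column. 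I expect the main obstacle to be the bookkeeping for termination and verifying that the contrapositive of Lemma $\ref{lem:col_indep}$ is applicable in case (a) despite minors only being nonnegative (not strictly positive) at that moment; the explicit $n = 3$ construction is then routine.
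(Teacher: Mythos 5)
Your proposal is correct and takes essentially the same route as the paper's proof of the hard direction: reduce to strictly positive $3\times 3$ minors via Corollary \ref{cor:3x3_pos}, iterate Lemma \ref{lem:col_subtract} until no column is $\preceq$ a cyclic neighbor, invoke Lemma \ref{lem:three_cols} to get $n=3$, and then construct the terminal $3\times3$ matrix from $I_3$ by hand. Two points where you go further than the paper are worth noting. First, the paper's proof never actually argues the ``constructable $\Rightarrow$ nonnegative odd minors'' direction of the iff (presumably treating it as immediate from the column operations); your induction on the length of the construction, with Join handled by multilinearity, Double giving a zero minor on the repeated pair, Cyclic shift reducing to an even permutation of the three chosen columns, and Re-scale by $a\geq 0$, supplies this missing direction explicitly and correctly. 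Second, the paper just says ``repeatedly apply lemma \ref{lem:col_subtract}'' without a termination argument; your lexicographic measure $(n,\,-\#\text{zero entries})$ makes this precise, and it does strictly decrease: case (a) reduces $n$, and in case (b) the zeros of $v_i$ persist (since $v_i\preceq v_j$ forces $(v_j)_k=0$ wherever $(v_i)_k=0$) while at least one new zero is created. Your worry that Lemma \ref{lem:col_indep}'s contrapositive might not apply in case (a) is unfounded --- that lemma only needs all $3\times3$ minors nonnegative (which they are after the maximal $\varepsilon$) and $\operatorname{rank}\geq 3$ (preserved, since subtracting a multiple of one column from another is rank-preserving); in fact you don't even need the detour, since Corollary \ref{cor:3x3_pos} already packages this.

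One small slip at the end: you say ``after a Cyclic shift I may assume the zeros lie on the diagonal,'' but the explicit construction you then give (Double, Re-scale, Join the two interior adjacent pairs, Cyclic shift, Join the first two columns) produces a matrix with zeros at positions $(2,1),(3,2),(1,3)$ --- a $3$-cycle pattern, not the diagonal. This is consistent with the ``even permutation'' claim but not with the ``on the diagonal'' normalization you chose; either do one additional Cyclic shift at the end, or normalize the zero pattern to the $3$-cycle rather than the identity. With that adjustment the construction matches the paper's (which normalizes to the diagonal and cyclic-shifts before joining) and the argument is complete.
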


\begin{proof}
    First, by corollary $\ref{cor:3x3_pos}$, we can assume, without loss of generality, that every $3 \times 3$ minor of $M$ is strictly positive. If any two adjacent columns of $M$ are comparable with respect to $\preceq$, then we can repeatedly apply lemma $\ref{lem:col_subtract}$; we can construct $M'$ from some nonnegative matrix $M''$ such that every $3 \times 3$ minor of $M''$ is strictly positive and no two consecutive columns of $M$ are comparable. It then follows from lemma $\ref{lem:three_cols}$, that $M''$ is a $3 \times 3$ square matrix. Hence we have reduced the problem to the case when $M$ is a square matrix.

    So we will proceed by assuming $M$ is a nonnegative $3 \times 3$ matrix with positive determinant. In fact lemma $\ref{lem:three_cols}$ tells us that if $M$ is not already obtained from $I_3$ by re-scaling its columns, then $M$ has exactly one zero in each row and in each column. 
    
    In this case, we assume the first entry in the first column of $M$ is zero. So $(v_1)_1 = 0$, but $(v_2)_1 \neq 0$ and $(v_3)_1 \neq 0$. Since each row and column has exactly one zero, we have either $(v_2)_2 = 0$ and $(v_3)_3 = 0$, or we have $(v_2)_3 = 0$ and $(v_3)_2 = 0$. One can easily verify that in the latter case, $\det(M) < 0$, so the first case must be true. Then we can construct $M$ via
    \begin{align*}
        \begin{bmatrix}1 & 0 & 0 \\ 0 & 1 & 0 \\ 0 & 0 & 1\end{bmatrix} \mapsto \begin{bmatrix}0 & 0 & 0 & 1 & 1 & 0 \\ 1 & 0 & 0 & 0 & 0 & 1 \\ 0 & 1 & 1 & 0 & 0 & 0\end{bmatrix} \mapsto \begin{bmatrix}0 & 0 & 0 & (v_2)_1 & (v_3)_1 & 0 \\ (v_1)_2 & 0 & 0 & 0 & 0 & (v_3)_2 \\ 0 & (v_1)_3 & (v_2)_3 & 0 & 0 & 0\end{bmatrix} \mapsto M.
    \end{align*}
\end{proof}

\end{document}